\documentclass[a4paper,reqno,11pt]{amsart}
\usepackage[a4paper,margin=1in]{geometry}
\usepackage{amsmath,amssymb,amsthm}

\theoremstyle{plain}
\newtheorem{theorem}{Theorem}[section]
\newtheorem{lemma}[theorem]{Lemma}
\newtheorem{proposition}[theorem]{Proposition}

\theoremstyle{definition}
\newtheorem{remark}[theorem]{Remark}
\numberwithin{equation}{section}


\title[Solitary bifurcations]{On the first bifurcation of solitary  waves}

\author{Vladimir Kozlov}
\address{Department of Mathematics, Link\"oping University, SE-581 83 Link\"oping, Sweden}

\begin{document}
	
\begin{abstract} We consider solitary water waves on the vorticity flow in a two-dimensional channel of finite depth. The main object of study is a branch of solitary waves starting from a laminar flow and then approaching an extreme wave. We prove that there always exists a bifurcation point on such branches. Moreover, the crossing number of the first bifurcation point is $1$, i.e. the bifurcation occurs at a simple eigenvalue.


\end{abstract}

\maketitle

\section{Introduction}

We consider steady water waves in a two-dimensional channel bounded below by a flat,
rigid bottom and above by a free surface that does not touch the bottom. The surface tension is neglected and the water motion can be rotational.
In appropriate Cartesian coordinates $(X, Y )$, the bottom coincides with the
$X$--axis and gravity acts in the negative $Y$--direction. We choose the frame of reference so that the velocity field is time-independent as well as the free-surface profile
which is supposed to be the graph of $Y = \xi(X)$, $x \in \Bbb R$, where $\xi$ is a positive and
continuous unknown function. Thus
$$
D=D_\xi = \{X\in \Bbb R, 0 <Y < \xi(X)\},\;\; S=S_\xi=\{X\in\Bbb R,\;Y=\xi(X)\}
$$
is the water domain and the free surface respectively. We will use the stream function $\Psi$, which is connected with the velocity vector $({\bf u},{\bf v})$ as $\Psi_Y=c-{\bf u}$ and ${\bf v}=\Psi_X$, where $c$ is a constant wave speed.

We assume that $\xi$ is a positive,   even  function.  Since the surface tension is neglected, $\Psi$ and
$\xi$ after a certain scaling satisfy the following free-boundary problem (see, for example, \cite{KN14}):
\begin{eqnarray}\label{K2a}
&&\Delta \Psi+\omega(\Psi)=0\;\;\mbox{in $D_\xi$},\nonumber\\
&&\frac{1}{2}|\nabla\Psi|^2+\xi=R\;\;\mbox{on $S_\xi$},\nonumber\\
&&\Psi=1\;\;\mbox{on $S_\xi$},\nonumber\\
&&\Psi=0\;\;\mbox{for $Y=0$},
\end{eqnarray}
where $\omega\in C^{1,\alpha}$, $\alpha\in (0,1)$, is a vorticity function and $R$ is the Bernoulli constant. We assume that $\Psi$ is even in $X$ and
\begin{equation}\label{J27ba}
\Psi_Y>0\;\;\mbox{on $\overline{D_\xi}$},
\end{equation}
which means that the flow is unidirectional.

The main our subject of study is the solitary waves. The solitary wave solution satisfies
\begin{equation}\label{Ju31a}
\;\;\xi(X)\to d,\;\;\Psi(X,Y)\to U(Y)\;\;\mbox{as $X\to\pm\infty$}
\end{equation}
uniformly in $Y$. Here $(U,d)$ is a uniform stream solution with the Bernoulli constant  $R=d+U'(d)^2/2$. Moreover the Froude number
\begin{equation}\label{Au22b}
F=\Big(\int_0^d\frac{dY}{U'(Y)^2}\Big)^{-2}
\end{equation}
for this uniform stream solution  must be $>1$.  Every non-trivial solitary wave satisfies $\xi(X)>d$ and it is symmetric, after a certain translation, and monotonic decreasing for positive $X$. In what follows we assume that such translation is already done and we consider even solitary waves.

If the Froude number $F$ is equal to $1$ or equivalently $R=R_c$, then the only solution of (\ref{K2a}) is the uniform stream solution $(U_*(Y),d_*)$, where $d_*=d(s_c)$ (see Sect. \ref{SJ6} for details). 

The Frechet derivative for the problem is evaluated for example in \cite{KL2}, \cite{Koz1} in the periodic case. The same evaluation can be used to obtain the Frechet derivative in the case, when the solution is a solitary wave.  The corresponding eigenvalue problem for the  Frechet derivative has the form
\begin{eqnarray}\label{J17ax}
&&\Delta w+\omega'(\Psi)w+\mu w=0 \;\;\mbox{in $D_\xi$},\nonumber\\
&&\partial_n w-\rho w=0\;\;\mbox{on $S_\xi$},\nonumber\\
&&w=0\;\;\mbox{for $Y=0$},
\end{eqnarray}
where $n$ is the unite outward normal to $Y=\xi(X)$ and
\begin{equation}\label{Sept17aa}
\rho=
\rho(X)=\frac{(1+\Psi_X\Psi_{XY}+\Psi_Y\Psi_{YY})}{\Psi_Y(\Psi_X^2+\Psi_Y^2)^{1/2}}\Big|_{Y=\xi(X)}.
\end{equation}
The function $w$ in (\ref{J17ax}) is supposed also to be even. 

Let us introduce several function spaces. For $\alpha\in (0,1)$ and $k=0,1,\ldots$, the space $C^{k,\alpha}(D)$ consists of bounded  functions in $D$ such that the norms $C^{k,\alpha}(D_{a,a+1})$ are uniformly bounded with respect to $a\in\Bbb R$. Here
$$
D_{a,a+1}=\{(X,Y)\in \overline{D},\;:\,a\leq x\leq a+1\}.
$$
The space    $C^{k,\alpha}_{0}(D)$  consists of function in $C^{k,\alpha}(D)$  vanishing for $Y=0$. The space $C^{k,\alpha}_{0, e}(D)$ consists of even functions from $C^{k,\alpha}_0(D)$.
Similarly we define the space    $C^{k,\alpha}_{ e}(\Bbb R)$) consisting of even functions in $C^{k,\alpha}(\Bbb R)$.

We will consider a branch of solitary water waves depending on a parameter $t> 0$, i.e.
\begin{equation}\label{J3b}
\Psi=\Psi(X,Y;t),\;\;\xi=\xi(X,t),\;\;R=R(t),\;\;t\in (0,\infty),
\end{equation}
where $R$ is the Bernoulli constant.

We assume that the above sequence of solitary waves satisfies the conditions of the following theorem, which follows from   \cite{W}, if we take into account Theorem 6.1 in \cite{W2}.

\begin{theorem}\label{T1}  Fix a H{\"o}lder exponent $\alpha\in(0, 1/2]$. There exists a continuous curve {\rm (\ref{J3b})}
of solitary wave solutions to {\rm (\ref{K2a})} with the regularity
\begin{equation}\label{Au30a}
(\Psi(t),\xi(t), R(t))) \in C^{2,\alpha}( D(t))\times C^{2,\alpha}({\Bbb R})\times (R_c,\infty),
\end{equation}
where $D(t)$ denotes the fluid domain corresponding to $\xi(t)$. The solution curve satisfies  the following property (critical laminar flow, i.e. $R=R_c$)
\begin{equation}\label{Ju18aa}
\lim_{t\to 0}(\Psi(t),\xi(t),R(t))=(U_*,d_*,R_c).
\end{equation}

Furthermore, there exists a sequence $\{t_j\}_{j=1}^\infty$ such that $t_j\to\infty$ as $j\to\infty$ and $(X_j,Y_j)\in D(t_j)$ such that the following  is valid
\begin{equation}\label{Au25a}
 (\Psi(t_j))_Y(X_j,Y_j)\to 0\;\; \mbox{as $j\to\infty$}\;\;\;\mbox{(Stagnation)}.
 \end{equation}


\end{theorem}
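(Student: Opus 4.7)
The plan is to combine the global bifurcation construction of \cite{W} with the a priori/nodal analysis summarized in Theorem 6.1 of \cite{W2}, since the statement is essentially a repackaging of those two results in the setting of even solitary waves with general vorticity.

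First, I would set up the problem as a nonlinear operator equation $\mathcal F(\Psi,\xi,R)=0$ on Banach spaces built out of $C^{2,\alpha}_{0,e}(D_\xi)\times C^{2,\alpha}_e(\Bbb R)\times\Bbb R$, after flattening the free boundary by a change of variables to a strip $\{0<Y<d\}$ so that the domain does not depend on $t$. The natural starting point is the critical laminar flow $(U_*,d_*,R_c)$, where the Froude number equals $1$ and the linearization of (\ref{K2a}) around the uniform stream (restricted to even, spatially localized perturbations) is known to be Fredholm with a one-dimensional kernel generated by a solitary mode. This is the standard mechanism for local bifurcation of solitary waves; Crandall--Rabinowitz then yields a local analytic curve $t\mapsto(\Psi(t),\xi(t),R(t))$ with $R(0)=R_c$, which takes care of (\ref{Ju18aa}).

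Second, I would invoke the global analytic continuation result from \cite{W}, built on the Dancer/Buffoni--Toland machinery, to extend this local curve to a maximal continuous (in fact real-analytic up to reparametrization) curve $t\in(0,\infty)$. Such a theorem delivers a finite list of alternatives at the ``end'' of the branch: blow-up of the $C^{2,\alpha}$ norm of $(\Psi,\xi)$, blow-up of $R$, collapse $\inf\xi\to 0$, loss of the unidirectionality condition (\ref{J27ba}) through $\inf\Psi_Y\to 0$, loss of injectivity of the parametrization, or reconnection with the laminar branch. Along with producing the curve this yields directly the regularity (\ref{Au30a}).

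The main obstacle, and the step where Theorem 6.1 of \cite{W2} is essential, is ruling out every alternative except stagnation. Here one uses the Bernoulli condition on $S_\xi$ to tie $\|\xi\|_\infty$ to $R$ and to $\inf_{S_\xi}\Psi_Y$, elliptic regularity combined with the spatial decay in (\ref{Ju31a}) to upgrade $L^\infty$ bounds on $\Psi$ and $\xi$ to $C^{2,\alpha}$ compactness on each slice $D_{a,a+1}$, and a maximum-principle/nodal argument to exclude $\inf\xi\to 0$, reconnection with the laminar family, and loss of injectivity along a branch of even, monotone-on-one-side solitary waves. Once these are ruled out, the only remaining alternative along a sequence $t_j\to\infty$ is $\inf\Psi_Y(\cdot;t_j)\to 0$, which after choosing points $(X_j,Y_j)\in D(t_j)$ near the infimum gives precisely (\ref{Au25a}). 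The technical heart is thus the preservation of the solitary-wave a priori estimates along the entire branch, which is exactly what \cite{W2} provides.
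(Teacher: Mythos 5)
Your high-level outline (Crandall--Rabinowitz local bifurcation at $F=1$ for the solitary mode, global analytic continuation à la \cite{W}/\cite{W2} with a list of terminal alternatives, then pruning the list down to stagnation) is the right architecture and is essentially what the paper's short ``proof'' amounts to, since the theorem is deliberately stated as a consequence of \cite{W} and Theorem 6.1 in \cite{W2}. However, your third step hides the two alternatives that the paper explicitly has to remove from the list in Theorem 1.1 of \cite{W}, and your argument as written does not actually dispose of the hardest one.

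Theorem 1.1 of \cite{W} allows, among the terminal scenarios, that the Froude number $F(t)$ (equivalently the Bernoulli constant $R(t)$) blows up along the branch, and separately that the branch terminates at a nontrivial solitary wave with $F=1$. You lump these into ``once these are ruled out'' and attribute the pruning to a priori estimates/nodal arguments and to Theorem 6.1 of \cite{W2}, but neither of those tools gives a bound on $R$, and neither rules out nontrivial solutions at $F=1$. The paper excludes $R\to\infty$ by Proposition~\ref{Pr24a}, which is a new result proved via the flow-force invariant (\ref{Au6a}): evaluating the flow force at the crest $X=0$ and at $X=\pm\infty$, combined with $\xi(0)>d_+$ (from \cite{KLN1,KN11b}) and the large-$R$ asymptotics $d(s_-)\sim 1/\sqrt{2R}$, $d(s_+)\sim R$, yields a contradiction for $R$ large. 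This is a genuinely separate ingredient, not a consequence of the Schauder/compactness machinery you invoke. The $F=1$ alternative is dispatched by the uniqueness theorem of \cite{KLN1} (only the laminar flow solves the problem at $R=R_c$), which again is not a byproduct of the a priori estimates. Without these two inputs the list of alternatives does not collapse to (\ref{Au25a}), so the proof is incomplete at exactly the step where the statement is nontrivial. A minor additional point: you also assert that the regularity (\ref{Au30a}) ``yields directly'' from the continuation, but the $C^{2,\alpha}$ regularity and the analytic reparametrization come from Theorem 6.1 of \cite{W2}; it is worth stating that dependence explicitly since (\ref{Au30a}) is part of the claim.
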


\begin{remark} {\rm (i)} The convergence in (\ref{Ju18aa}) is understood in the spaces
$$
C^{2,\alpha}(D(t)) \times C^{2,\alpha}({\Bbb R})\times \Bbb R
$$

{\rm (ii)} The dependence on $t\in (0,\infty)$ in (\ref{Au30a}) is analytic in the variables $(q,p)$ using in partial hodograph transformation, see \cite{W2}, Sect.6.

{\rm (iii)} There are two more options in the above theorem, see Theorem 1.1 in \cite{W}. The first one says that the Froude number (equivalently the Bernoulli constant) can be arbitrary large along the curve (\ref{J3b}). This option is excluded by Proposition \ref{Pr24a}. The second one includes existence of non-trivial solitary waves with the Froude number $1$ or equivalently with the Bernoulli constant $R_c$. By Theorem 1 in \cite{KLN1} the only solution of (\ref{K2a}) is the uniform stream solution. This fact excludes the second option.

{\rm (iv)} In paper \cite{W} the Froude number $F(t)$ is used instead of $R(t)$. Since the dependence $R$ on $F$ is monotone and analytic (see Proposition \ref{Pr26b}) we use here the function $R(t)$.
\end{remark}

In Sect. \ref{S23a} we prove the following clarification of the stagnation condition (\ref{Au25a}):

\begin{proposition}\label{Pr26} The stagnation condition {\rm (\ref{Au25a})} is equivalent to the following options

{\rm (i)} If $R_0=\infty$ then  there exists a sequence $\{t_j\}_{j\geq 1}$, $t_j\to\infty$ as $j\to\infty$  such that
\begin{equation}\label{Au29ba}
\lim_{j\to\infty}|\xi'(t_j)(0)|=\infty
\end{equation}
or
\begin{equation}\label{Au29bb}
\lim_{j\to\infty}R(t_j)=R\;\;\mbox{and}\;\;\lim_{j\to\infty}\xi(t_j)(0)=R.
\end{equation}

{\rm (ii)} If $R_0<\infty$ then besides the options in {\rm (i)} there is one more option:
\begin{equation}\label{Au29bc}
\lim_{j\to\infty}R(t_j)=R\;\;\mbox{and}\;\;\lim_{j\to\infty}\sup_{X\in\Bbb R}\Psi_Y(t_j)(X_j,Y_j)=0,
\end{equation}
where $Y_j\to 0$ as $j\to \infty$.

\end{proposition}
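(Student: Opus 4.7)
The plan is to analyze the geometric location of the stagnation $\Psi_Y(t_j)(X_j, Y_j) \to 0$ in the limit and to convert each possibility into one of the listed options by means of the Bernoulli boundary condition.

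First I would rule out stagnation strictly in the interior. Differentiating $\Delta \Psi + \omega(\Psi) = 0$ in $Y$ yields the linear elliptic equation $\Delta(\Psi_Y) + \omega'(\Psi)\Psi_Y = 0$; since $\Psi_Y > 0$ throughout $\overline{D}$ by (\ref{J27ba}), any interior stagnation would, after a horizontal translation $\Psi_j(X,Y) := \Psi(t_j)(X+X_j, Y)$ and extraction of a locally convergent subsequence, produce a nonnegative solution of an elliptic equation attaining the value $0$ at an interior point of the limiting strip, contradicting the strong maximum principle once one shows that the limit domain $\{0 < Y < \xi_\infty(X)\}$ is nondegenerate in $Y$. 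Consequently, along a subsequence, $Y_j$ must approach either the bottom $Y=0$ or the free surface.

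For surface stagnation the key tool is the Bernoulli condition. Differentiating $\Psi = 1$ on $S_\xi$ gives $\Psi_X = -\Psi_Y\,\xi'$ along the surface, hence $|\nabla \Psi|^2 = \Psi_Y^2(1+(\xi')^2)$ and the dynamic condition reduces to
\[ \Psi_Y^2\bigl(1 + \xi'(X)^2\bigr) = 2\bigl(R - \xi(X)\bigr) \quad \text{on } S_\xi. \]
If the stagnation localizes at the free boundary, then $\Psi_Y(t_j)(X_j,\xi(t_j)(X_j)) \to 0$ forces, along subsequences, either $|\xi'(t_j)(X_j)| \to \infty$ or $R(t_j) - \xi(t_j)(X_j) \to 0$. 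Exploiting the symmetry and the monotonicity of $\xi(t_j)$ in $|X|$, whose maximum is $\xi(t_j)(0)$, these phenomena may be transferred to $X = 0$: the height limit yields (\ref{Au29bb}) and the slope blow-up yields (\ref{Au29ba}).

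For bottom stagnation, $Y_j \to 0$, I would show this option is incompatible with $R_0 = \infty$. The asymptotic laminar stream $(U(t), d(t))$ satisfies $R(t) = d(t) + U'(d(t))^2/2$; combined with $F(t) > 1$ and formula (\ref{Au22b}), an unbounded $R(t)$ yields quantitative lower bounds on $U'$ near $Y = 0$ that propagate via $\Delta \Psi + \omega(\Psi) = 0$ to preclude $\Psi_Y$ from vanishing in a neighbourhood of the bottom. When $R_0 < \infty$ this obstruction disappears and bottom stagnation produces precisely the third option (\ref{Au29bc}). The converse implications, that each of (\ref{Au29ba}), (\ref{Au29bb}), (\ref{Au29bc}) implies (\ref{Au25a}), follow directly from the displayed identity above applied at a suitable boundary point. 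The principal difficulty will be the non-compactness inherent in solitary waves: $X_j$ may drift to $\pm\infty$, so all limit arguments must be carried out on translated domains and must rule out both the scenario in which stagnation escapes to spatial infinity and the scenario in which the shifted profiles converge only to the uniform stream; monotonicity of $\xi$ in $|X|$ and the far-field condition (\ref{Ju31a}), together with standard local elliptic estimates, should suffice to anchor every limit either at $X = 0$ or at $Y = 0$.
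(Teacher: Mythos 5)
Your proposal takes a genuinely different organizational route from the paper: you do a case analysis on where the stagnation localizes (interior, bottom, surface), whereas the paper argues by contraposition --- it assumes both surface options fail, derives from the Bernoulli identity and from Proposition~3.1 of \cite{KL1} a uniform \emph{boundary} lower bound for $\Psi_Y$, and then propagates that lower bound into the interior by Harnack, contradicting stagnation in one step. The ingredients overlap substantially (the Bernoulli relation $\Psi_Y^2(1+\xi'^2)=2(R-\xi)$ at the surface, the bound $\Psi_Y(X,0)\geq\sqrt{s^2-s_0^2}$ at the bottom, and a Harnack-type argument), but the contraposition avoids having to decide where stagnation sits or to control translated limits, which is exactly where your version is fragile.

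Concretely, your interior case has a gap: after translating by $X_j$ and extracting a locally convergent subsequence, the strong maximum principle only yields a contradiction if the limit of $\Psi_Y$ is \emph{nontrivial}, but nothing you have established at that stage precludes the translated $\Psi_Y$ from converging to $0$ locally uniformly (i.e.\ the limit being identically zero on the limit strip). To rule this out you need exactly the uniform lower bounds on $\Psi_Y$ at $Y=0$ and at $Y=\xi(X)$ that the paper establishes first, so your ``interior'' case does not actually stand independently of the two boundary cases. Likewise, your bottom case is stated only heuristically (``quantitative lower bounds on $U'$ near $Y=0$ \dots propagate \dots''): the decisive input is Proposition~3.1 of \cite{KL1} giving $\Psi_Y(X,0)\geq\sqrt{s^2-s_0^2}$, together with the observation that this bound is uniformly positive because $s>s_c>s_0$ when $R$ is bounded (which Proposition \ref{Pr24a} supplies when $R_0=\infty$), and that precisely this uniformity degenerates when $R_0<\infty$ --- which is the content of option~(\ref{Au29bc}). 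If you restructure along the paper's lines (establish the boundary lower bounds first under the negation of (\ref{Au29ba})--(\ref{Au29bb}), then propagate with Harnack), all three of your cases collapse into a single argument and the non-compactness in $X$ is handled automatically.
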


Denote by ${\mathcal A}(t)={\mathcal A}_{(\Psi,\xi)}$ the unbounded operator in $L^2(D_\xi)$ corresponding to the problem (\ref{J17ax}), i.e.
${\mathcal A}_{(\Psi,\xi)}=-\Delta -\omega'(\Psi)$ with the domain
$$
{\mathcal D}(t)={\mathcal D}({\mathcal A}_{(\Psi,\xi)})=\{w\in H^2(D_\xi)\,:\,w(x,0)=0,\;\;\partial_\nu w-\rho w=0\, \mbox{for}\; Y=\xi(X)\}.
$$

For every $t>0$ we denote by
\begin{equation}\label{Au27b}
(U(t),d(t),R(t))
\end{equation}
 the limit uniform stream solution and Bernoulli constant in the asymptotics (\ref{Ju31a}).

Consider  the eigenvalue problem
\begin{eqnarray*}
&& -v^{''}-\omega'(U(t))v=\nu v\;\;\mbox{on $(0,d(t))$},\nonumber\\
&&v(0)=0\;\;\mbox{and}\;\;v'(d(t))-\rho_0(t)v(d(t))=0,
\end{eqnarray*}
where
\begin{equation}\label{Okt25a}
\rho_0=\frac{1+U_YU_{YY}}{U_Y^2}\Big|_{Y=d}.
\end{equation}
As it is shown in Lemma \ref{LJu20a}, the eigenvalues of this problem are positive for all $t>0$. We denote by $\nu_0=\nu_0(t)$ the lowest eigenvalue.  This eigenvalue is simple and corresponding eigenfunctions does not change sign on $(0,d(t)]$.

The following proposition is possibly known but we present its proof in Sect. \ref{S31a}  for readers convenience
\begin{proposition}\label{PrOkt24a} The operator (Frechet derivative)  ${\mathcal A}$ has a continuous spectrum $[\nu_0,\infty)$. The spectrum on the half line $(-\infty,a]$ consists of isolated eigenvalues of finite multiplicity for any $a<\nu_0$.
\end{proposition}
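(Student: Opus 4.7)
The plan is to establish that $\mathcal{A}(t)$ is, after straightening the free boundary, a compactly localized perturbation of a reference operator with $X$-independent coefficients on an infinite strip, and then compute the spectrum of the reference operator explicitly by a partial Fourier transform in $X$.

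First, I would flatten the domain via the change of variables $(X,Y)\mapsto (X, Yd(t)/\xi(X,t))$. Since $\xi(t)\in C^{2,\alpha}(\mathbb{R})$ is positive and bounded below, this is a smooth diffeomorphism between $D_{\xi(t)}$ and the strip $\Pi:=\mathbb{R}\times(0,d(t))$, and it conjugates $\mathcal{A}(t)$ (up to a weight in the $L^2$-measure coming from the Jacobian) into a self-adjoint second-order elliptic operator $\tilde{\mathcal{A}}$ on $\Pi$ with the same spectrum. The asymptotics (\ref{Ju31a}), lifted to the $C^{2,\alpha}$ topology through elliptic regularity and the standard decay estimates for solitary waves, guarantee that the coefficients of $\tilde{\mathcal{A}}$, together with the boundary coefficient $\rho$, converge uniformly in $Y$ as $|X|\to\infty$ to those of the model operator
$$
\mathcal{A}_\infty:=-\partial_X^2-\partial_Y^2-\omega'(U(Y))
$$
on $\Pi$, with boundary conditions $w(X,0)=0$ and $\partial_Y w(X,d(t))-\rho_0(t)\,w(X,d(t))=0$.

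Next, I would identify the spectrum of $\mathcal{A}_\infty$ by the partial Fourier transform in $X$, which realizes $\mathcal{A}_\infty$ as the direct integral $\int^{\oplus}_{\mathbb{R}}(\lambda^2+L)\,d\lambda$, where $L=-\partial_Y^2-\omega'(U(Y))$ on $(0,d(t))$ is the one-dimensional Sturm--Liouville operator from the proposition. Since $L$ has compact resolvent with eigenvalues $\nu_0<\nu_1<\cdots\to\infty$, the spectral theorem for direct integrals gives
$$
\sigma(\mathcal{A}_\infty)=\bigcup_{k\ge 0}[\nu_k,\infty)=[\nu_0,\infty),
$$
purely absolutely continuous.

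The main step, and where I expect the chief technical obstacle, is to prove that the resolvent difference $(\tilde{\mathcal{A}}-z)^{-1}-(\mathcal{A}_\infty-z)^{-1}$ is compact for some (hence every) $z$ in the common resolvent set. Using the second resolvent identity, one is reduced to showing compactness of $\mathbf{1}\!\!\mathbf{1}_{\Pi}(\tilde{\mathcal{A}}-\mathcal{A}_\infty)(\mathcal{A}_\infty-z)^{-1}$. Splitting with a cutoff $\chi_R(X)$ supported on $|X|\le R$, the localized piece is compact by the Rellich--Kondrachov embedding $H^2(\Pi\cap\{|X|\le R+1\})\hookrightarrow L^2$, while the far piece has operator norm bounded by the supremum of the coefficient differences on $|X|>R$, which tends to $0$ by the asymptotics above. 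Special care is required for the Robin-boundary contribution $\rho-\rho_0$, which must be handled through the quadratic form rather than pointwise; the decay of $\rho(X)-\rho_0$ as $|X|\to\infty$, together with a trace estimate, again yields compactness via the same cutoff decomposition. Compactness of the resolvent difference gives $\sigma_{\mathrm{ess}}(\tilde{\mathcal{A}})=\sigma_{\mathrm{ess}}(\mathcal{A}_\infty)=[\nu_0,\infty)$. Finally, because $\tilde{\mathcal{A}}$ is self-adjoint and bounded below, any spectral point in $(-\infty,\nu_0)$ lies outside the essential spectrum, hence, by the standard characterization of discrete spectrum, is an isolated eigenvalue of finite multiplicity.
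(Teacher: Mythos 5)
Your proof is correct and reaches the same conclusion, but it takes a technically different route from the paper's. The paper works directly on the two unbounded domains $D_\xi$ and the strip $\Pi_d$: for the model problem on $\Pi_d$ (Sect.\ \ref{SJu21a}) it uses the Fourier transform in $X$ to get $\sigma(\mathcal{A})=[\nu_0,\infty)$, and it then passes to $D_\xi$ by observing that the symmetric difference $D_\xi\setminus\Pi_d$ is thin at infinity (exponential decay of $\xi-d$) and hence satisfies the Rellich compact-embedding property, invoking stability of the continuous spectrum under such compact domain perturbations; for the statement about the discrete part it gives a separate, hands-on contradiction argument with an orthonormal sequence of eigenfunctions below $a<\nu_0$, using local $C^{2,\alpha}$ elliptic estimates and exponential decay of eigenfunctions in $|X|$ to deduce $L^2$-compactness. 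You instead straighten the free boundary onto a fixed strip, identify the model operator via the partial Fourier transform (the same ingredient as the paper's Sect.\ \ref{SJu21a}), and prove compactness of the resolvent difference by a cutoff decomposition, so that Weyl's essential-spectrum stability handles both the location of the essential spectrum and, automatically, the discreteness of the spectrum below $\nu_0$. What you gain is a cleaner conclusion for the discrete part (it falls out of $\sigma_{\mathrm{ess}}(\tilde{\mathcal{A}})=[\nu_0,\infty)$ rather than needing a separate compactness argument); what you pay for is the extra bookkeeping in the flattening step — the Jacobian weight must be absorbed by a unitary conjugation to keep self-adjointness, and the conjugated operator acquires additional first-order terms whose coefficients you must verify decay as $|X|\to\infty$, which you flag but do not carry out. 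One further subtlety you handle correctly and the paper leaves implicit is the Robin-boundary contribution $\rho-\rho_0$: it does not enter as a multiplication operator in the interior, so its compactness must go through the quadratic form and a trace estimate, exactly as you indicate.
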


The main result of this paper is the following

\begin{theorem}\label{T1a}
{\rm (i)} The lowest eigenvalue of the operator ${\mathcal A}(t)$ for $t>0$  is always negative and simple with the eigenfunction which does not change sign in $D(t)$.
We denote it by $\mu_0(t)$.

{\rm (ii)} For small positive $t$ the spectrum of the operator ${\mathcal A}(t)$ on the half line $\mu\leq 0$ consists of the  eigenvalue $\mu_0(t)$ only.

{\rm (iii)} If for a certain $t_1>0$ and $t\in (0,t_1)$ the operator ${\mathcal A}(t)$  has the only negative eigenvalue $\mu_0$ and ${\mathcal A}(t_1)$ has the eigenvalue $\mu=0$ then this eigenvalue is simple.

\end{theorem}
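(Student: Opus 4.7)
Proposition \ref{PrOkt24a} reduces matters to exhibiting at least one eigenvalue in $(-\infty, 0]$ and then proving that the bottom one is simple, sign-definite, and strictly negative. Simplicity of the ground eigenvalue together with sign-definiteness of its eigenfunction follows from the standard positivity-preserving argument: if $w_0$ minimizes the Rayleigh quotient then so does $|w_0|$, and the Euler--Lagrange equation $(-\Delta - \omega'(\Psi))|w_0| = \mu_0 |w_0|$ combined with the strong maximum principle and the Hopf boundary-point lemma (applied to the Robin condition $\partial_n w = \rho w$ on $S_\xi$) forces $|w_0| > 0$; any second orthogonal eigenfunction at $\mu_0$ would likewise be sign-definite, contradicting orthogonality. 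For strict negativity I would exploit the $X$-reflection symmetry inherited from the evenness of $\Psi$ and $\xi$. The key observation is that $\Psi_X$ itself is an eigenfunction of $\mathcal{A}(t)$ at eigenvalue $0$: the interior equation comes from differentiating $\Delta \Psi + \omega(\Psi) = 0$ in $X$; the Dirichlet condition $\Psi_X(X,0)=0$ is inherited from $\Psi(X,0)=0$; and a short computation using $\xi'(X) = -\Psi_X/\Psi_Y$ on $S_\xi$ together with the total $X$-derivative of the Bernoulli equation yields $\partial_n \Psi_X - \rho\Psi_X = 0$. The exponential decay of $\Psi - U(Y)$ as $X \to \pm\infty$ characteristic of solitary waves gives $\Psi_X \in L^2(D_\xi)$. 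Since $\Psi_X$ is odd while the sign-definite ground eigenfunction is even, the two are linearly independent; if $\mu_0(t) = 0$ they would coexist in the kernel, contradicting simplicity. Hence $\mu_0(t) < 0$.

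\textbf{Part (ii).} By (\ref{Ju18aa}), $(\Psi(t),\xi(t))\to (U_*,d_*)$ in $C^{2,\alpha}$ as $t\to 0^+$. The limiting operator $\mathcal{A}_*$ on the strip $\{0 < Y < d_*\}$ is translation-invariant in $X$, so Fourier analysis in $X$ gives its spectrum as $[\nu_0(U_*),\infty)$; the criticality $F=1$ at the laminar flow is exactly the statement $\nu_0(U_*)=0$, so $\mathcal{A}_*$ has no discrete eigenvalue below its continuous spectrum. I would then (a) straighten the moving domain $D_{\xi(t)}$ onto the fixed strip by a near-identity diffeomorphism, casting $\mathcal{A}(t)$ as $\mathcal{A}_* + V(t)$ with $V(t)$ small in operator norm and essentially localized in $X$; and (b) apply a standard min-max or Birman--Schwinger count to conclude that for $t$ small at most one eigenvalue can emerge below zero. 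Combined with Part (i), the spectrum in $\mu \le 0$ then consists exactly of $\mu_0(t)$.

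\textbf{Part (iii); main obstacle.} Suppose for contradiction that $\mu=0$ is an eigenvalue of $\mathcal{A}(t_1)$ of multiplicity $k\ge 2$. By the analyticity of the curve (Remark after Theorem \ref{T1}) the family $t\mapsto \mathcal{A}(t)$ is analytic, and Kato's perturbation theorem produces $k$ analytic eigenvalue branches $\mu^{(j)}(t)$ emerging from $0$ at $t_1$. The hypothesis that $\mu_0$ is the \emph{only} negative eigenvalue of $\mathcal{A}(t)$ for $t\in (0,t_1)$ forces $\mu^{(j)}(t)\ge 0$ for every $j$ and every $t<t_1$ near $t_1$, so $\partial_t\mu^{(j)}(t_1)\le 0$ for every $j$. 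These slopes are the eigenvalues of the symmetric $k\times k$ matrix $M_{ij}=(\dot{\mathcal{A}}(t_1)\varphi_i,\varphi_j)$ on an orthonormal basis $\{\varphi_i\}$ of $\ker\mathcal{A}(t_1)$, so $M$ is negative semi-definite. I would then compute $\dot{\mathcal{A}}(t_1)$ explicitly in terms of $\partial_t\Psi$ and $\partial_t\xi$, rewrite $M_{ij}$ as a boundary integral on $S_{\xi(t_1)}$ involving $\partial_t\xi$, and show this form has rank at most one, forcing $k=1$. The main difficulty I expect is exactly this rank computation --- extracting enough structural information from the boundary variation to rule out a two-dimensional negative semi-definite direction. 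A fallback route would be to prove directly that $\dim\ker\mathcal{A}(t_1)=1$ by combining the exponential decay of even kernel elements at $X=\pm\infty$ (at rate $\sqrt{\nu_0(t_1)}$ extracted from the transverse eigenvalue problem), the evenness condition $\partial_X\varphi \equiv 0$ on $\{X=0\}$, and unique continuation for the elliptic equation with the real-analytic coefficients arising from the hodograph parametrization.
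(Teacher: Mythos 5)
Your Part (i) is essentially the paper's argument: the paper also uses the sign-definiteness/Courant argument for simplicity of the ground state and uses $\Psi_X$ (being in the kernel of the unrestricted operator) to show $\mu_0\le 0$ and then $\mu_0<0$. The paper's version is slightly slicker --- it uses $|\Psi_X|$ directly as an even trial function in the Rayleigh quotient (so one never leaves the class of even functions), and rules out $\mu_0=0$ because $|\Psi_X|$ would then be an eigenfunction but lacks $C^2$ regularity at the nodal set $\{X=0\}$ --- but the ideas are the same.

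Your Part (ii) takes a genuinely different route and it has a real gap. You cast $\mathcal{A}(t)$ as $\mathcal{A}_*+V(t)$ and invoke a ``standard min-max or Birman--Schwinger count'' to conclude at most one eigenvalue emerges below $0$. This is not standard here, for two reasons. First, the relevant threshold is not fixed: the bottom of the essential spectrum of $\mathcal{A}(t)$ is $\nu_0(t)$, which tends to $0$ as $t\to 0$ but is strictly positive for $t>0$, so the entire question is how many discrete eigenvalues sit in the shrinking gap $(-\infty,\nu_0(t))$, and threshold perturbation theory in this regime is delicate. Second, Birman--Schwinger-type bounds on the number of bound states degenerate precisely at a spectral threshold (the relevant Birman--Schwinger operator is not compact there), so ``at most one'' does not follow from abstract perturbation theory without a genuine extra input. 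The paper sidesteps this entirely: it first quotes \cite{W} for invertibility of the Frechet derivative on the solitary branch near $t=0$ (so $\mu_1(t)\neq 0$ for small $t$), then constructs the companion branch of Stokes waves emanating from $(U_+,d_+)$ with the same Bernoulli constant, proves by an explicit second-order asymptotic expansion (Proposition \ref{PrNov15} and the sign relation (\ref{J3aa})) that $\tilde\mu_1>0$ along this Stokes branch, and finally uses the Benjamin--Lighthill uniqueness/classification result (Theorem \ref{Tokt26}) to transfer the sign to the solitary solution via the $\tilde t\to\infty$ limit. The concrete computation $\lambda_2<0$ (hence $\mu_2>0$) does the work your abstract count cannot.

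Your Part (iii) is incomplete by your own admission, and the actual proof in the paper (Proposition \ref{PF10a}) uses a very different and self-contained nodal-domain argument, not analytic perturbation theory. The paper works with the half-domain $D_+$ and the quadratic form $a_+$, shows any $\mu_1=0$ eigenfunction has exactly two nodal domains, shows the separating nodal line must touch the free surface $S_+$, and then, assuming $\dim\mathcal{X}\ge 2$, picks an eigenfunction $w_*$ vanishing at the crest $(0,\xi(0))$, perturbs it by $\beta u_*=\beta\Psi_X$ (which is positive in $D_+$ and satisfies the same linearized problem with a Dirichlet rather than Neumann lateral condition at $X=0$), cuts along the perturbed nodal line into $U_1,U_2$, forms $U_1+\theta U_2\perp\Phi_0$, and derives a contradiction with regularity via the Green's identity (\ref{Okt31a}). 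The key structural ingredient your proposal lacks is precisely this use of $\Psi_X$ as a positive comparison function in the half-domain. Your slope-matrix rank computation is not carried out (and it is not obvious the rank is one: the boundary variation need not be a rank-one form a priori), and your fallback via unique continuation is also only sketched. Neither route as written closes the argument.
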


Denote by $\mu_1(t)$ the second eigenvalue of the operator ${\mathcal A}(t)$ on the half-line $(-\infty,\nu_0)$ if it exists, otherwise we put $\mu_1(t)=\nu_0(t)$.
Assume that
\begin{eqnarray}\label{Okt30a}
&&\mbox{the existence of point $t_*>0$ such that: (i) $\mu_1(t)>0$ for $t\in (0,t_*)$},\nonumber\\
&&\mbox{ $\mu_1(t)<0$ for small positive $t-t_*$}.
\end{eqnarray}
According to Theorem \ref{T1a}(iii) the eigenvalue $\mu_1(t_*)=0$ is simple. This implies that the crossing number of the Frechet derivative is $1$ at $t_*$.
\begin{proposition}\label{PrN0v20} Let the condition {\rm (\ref{Okt30a})} hold. Then $t_*$ is a bifurcation point and bifurcating solution builds a connected continuum.
\end{proposition}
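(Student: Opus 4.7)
The plan is to reformulate (\ref{K2a}) as a nonlinear operator equation $F(w,t)=0$ in suitable Banach spaces so that the branch of Theorem \ref{T1} becomes a curve of trivial solutions, and then to apply Rabinowitz's global bifurcation theorem using the odd crossing number guaranteed by Theorem \ref{T1a}(iii).

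First I would fix the fluid domain via the partial hodograph change of variables $(q,p)=(X,\Psi)$ used in \cite{W2}, which reduces (\ref{K2a}) to a quasilinear elliptic boundary value problem on the fixed strip $\{(q,p):q\in\Bbb R,\;0<p<1\}$ for an even unknown $h(q,p)$ whose boundary trace $h(q,1)$ encodes the free surface. Setting $h=h(t)+w$, where $h(t)$ is the branch solution, one obtains an equation $F(w,t)=0$ of class $C^1$ between appropriate even H\"older spaces built on the $C^{2,\alpha}_{0,e}$ scales of the excerpt, but equipped with an exponential weight $e^{\delta|q|}$ for some $0<\delta<\sqrt{\nu_0(t_*)}$. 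The strict positivity $\nu_0(t_*)>0$ (Lemma \ref{LJu20a}) is exactly what makes such a choice admissible and, together with Proposition \ref{PrOkt24a}, guarantees that the linearisation $F_w(0,t)$ --- which in the original coordinates coincides, up to the pullback, with $\mathcal A(t)$ --- is Fredholm of index $0$ at every $t$ for which $0\notin\sigma(\mathcal A(t))$.

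Next I would verify the bifurcation criterion at $t_*$. By (\ref{Okt30a}) the eigenvalue $\mu_1(t)$ is strictly positive on $(0,t_*)$, and Theorem \ref{T1a}(iii) (taken with $t_1=t_*$, whose hypothesis is furnished by Theorem \ref{T1a}(i)--(ii) together with the first line of (\ref{Okt30a})) ensures that $\mu_1(t_*)=0$ is geometrically simple. Since $F_w(0,t_*)$ is Fredholm of index $0$, the cokernel is then also one-dimensional, spanned by the adjoint eigenfunction. The second line of (\ref{Okt30a}) asserts $\mu_1(t)<0$ just past $t_*$, so the number of strictly negative eigenvalues of $\mathcal A(t)$ jumps by exactly one as $t$ crosses $t_*$; equivalently, the crossing number of $F_w(0,t)$ at $t_*$ is $1$, which is the odd-parity hypothesis of Rabinowitz's theorem. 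Invoking the global bifurcation theorem in the form adapted to Fredholm operators (as in Kielh\"ofer or the Buffoni--Toland reformulation of Rabinowitz for PDEs in the plane) then yields a connected continuum $\mathcal C$ of non-trivial solutions of $F(w,t)=0$ with $(0,t_*)\in\overline{\mathcal C}$; pulling back through the hodograph transform produces the claimed continuum of bifurcating solitary waves.

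The main technical obstacle, in my view, is not the bifurcation-theoretic bookkeeping but the functional-analytic setup on the unbounded strip. Because $\mathcal A(t)$ has essential spectrum $[\nu_0(t),\infty)$ it is not Fredholm on unweighted H\"older or $L^2$ spaces, and one must verify with care that (a) the nonlinear map $F$ is smooth on the chosen exponentially weighted spaces, (b) the derivative $F_w(w,t)$ remains Fredholm of index $0$ on these spaces under the nonlinear perturbation, and (c) the continuum produced by the abstract theorem consists of functions that, after inversion of the hodograph transform, still satisfy the solitary-wave asymptotics (\ref{Ju31a}) toward the limit uniform stream $U(t)$. The positivity of $\nu_0$ asserted in Lemma \ref{LJu20a} and the discreteness of the subthreshold spectrum from Proposition \ref{PrOkt24a} are what make each of these verifications possible.
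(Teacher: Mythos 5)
Your outline is correct and takes essentially the same route as the paper's (very brief) sketch: pass to the partial hodograph coordinates to obtain a smooth — in fact real-analytic, by Remark (ii) — family of Fredholm maps, identify the linearisation with $\mathcal A(t)$, deduce from Theorem \ref{T1a}(iii) together with (\ref{Okt30a}) that the crossing number at $t_*$ is $1$, and invoke an odd-crossing bifurcation theorem. Where you and the paper part ways is in the choice of abstract machinery. The paper appeals to Kielh\"ofer's Theorem I.16.4 and \cite{Ki1a} (degenerate bifurcation at a simple eigenvalue) for the local bifurcation, precisely because (\ref{Okt30a}) supplies only a sign change of $\mu_1$ and not the Crandall--Rabinowitz transversality $\mu_1'(t_*)\neq 0$, so the analyticity of the hodograph family is used; the local branch is then continued to a global continuum via the solitary-wave global bifurcation framework of \cite{W}, \cite{W2}, which already contains the weighted-space Fredholm analysis on the unbounded strip. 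You instead go directly to a Rabinowitz/Kielh\"ofer global theorem for Fredholm families on exponentially weighted spaces, relying only on the odd crossing number — which degree theory indeed permits without transversality or analyticity. Both routes are legitimate, and yours is conceptually tidier in folding local and global into one step; the price is that the verifications you list as (a)--(c) must then be carried out from scratch, whereas the paper inherits them from \cite{W} and \cite{W2}, including the persistence of the solitary asymptotics (\ref{Ju31a}) that you single out in (c).
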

The proof of this assertion follows from Theorem I.16.4,\cite{Ki1}, (see also \cite{Ki1a}). First one must use the partial hodograph transformation to obtain a problem with an analytic operator function and then apply result of \cite{Ki1} on the local bifurcation under the condition (\ref{Okt30a}). Now one can extend this local bifurcation to a global bifurcation branch by using bifurcation results proved  in \cite{W} and  \cite{W2}.


In the next proposition we give a condition when (\ref{Okt30a}) is fulfilled

 \begin{proposition}\label{PrNov11}
 Let there exists a sequence $\{t_j\}_{j=1}^\infty$ such that the curve {\rm (\ref{J3b})} approach an extreme solitary wave with the angle\footnote{Actually the condition on the angle can be removed, see Remark \ref{PrN26}. Since the paper \cite{KL5} is not published yet we will keep this angle requirement.} $120^\circ$  at the crest along this sequence then there exists a point $t_*$ which satisfies the condition {\rm (\ref{Okt30a})}.
 \end{proposition}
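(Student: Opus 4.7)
The plan is to verify the two halves of condition (\ref{Okt30a}) separately and then combine them via continuity of $\mu_1$ as a function of $t$. The positive part, $\mu_1(t) > 0$ for small $t>0$, will follow immediately from Theorem~\ref{T1a}(ii); the substantive task is to exhibit, along the sequence approaching the extreme wave, a parameter where $\mathcal{A}(t_j)$ has acquired a second non-positive eigenvalue, i.e.\ where $\mu_1(t_j) < 0$. The desired $t_*$ will then be the first parameter at which $\mu_1$ crosses zero.

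For the easy half, Theorem~\ref{T1a}(ii) says that for small $t>0$ the only non-positive spectral point of $\mathcal{A}(t)$ is $\mu_0(t)$, and Lemma~\ref{LJu20a} forces $\nu_0(t) > 0$, so by definition $\mu_1(t) > 0$. For the hard half, the plan is to apply the min-max principle to the quadratic form associated to $\mathcal{A}(t_j)$,
$$
Q_{t_j}[w] \,=\, \int_{D(t_j)} \bigl(|\nabla w|^2 - \omega'(\Psi)\, w^2\bigr)\,dX\,dY \;-\; \int_{S(t_j)} \rho(X)\, w^2\, ds,
$$
on even test functions vanishing at $Y=0$. As the waves approach an extreme wave with a $120^\circ$ angle, $\Psi_Y \to 0$ at the crest, so by (\ref{Sept17aa}) the Robin coefficient $\rho$ blows up there at a rate controlled by the Stokes corner asymptotics. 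The idea is to exhibit two even, linearly independent cut-off test functions $w_1^{(j)}, w_2^{(j)}$ supported in a shrinking neighborhood of the crest, at two distinct length-scales (or with $L^2$-orthogonal surface traces), arranged so that the large negative surface contribution $-\int \rho (w_i^{(j)})^2\, ds$ dominates the bulk Dirichlet integral and the vorticity term. On the $2$-dimensional subspace they span, $Q_{t_j}$ is then negative definite; the min-max principle supplies two eigenvalues of $\mathcal{A}(t_j)$ strictly below zero, and since both lie below $\nu_0(t_j) > 0$, these are precisely $\mu_0(t_j) < 0$ and $\mu_1(t_j) < 0$.

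Granted both estimates, set
$$
t_* \,:=\, \inf\{\, t > 0 \,:\, \mu_1(t) \leq 0\, \},
$$
which by the two previous steps satisfies $0 < t_* < \infty$. Continuity of $\mu_1$ in $t$ (either as an isolated eigenvalue below the essential spectrum, or as the bottom $\nu_0(t)$ of that spectrum before an eigenvalue has emerged) combined with the definition of $t_*$ forces $\mu_1(t) > 0$ on $(0,t_*)$ and $\mu_1(t_*) = 0$. Then Theorem~\ref{T1a}(iii) makes this zero eigenvalue simple, and the analytic dependence on the partial-hodograph parameter noted in part~(ii) of the remark following Theorem~\ref{T1}, together with analytic perturbation theory for a simple eigenvalue, promotes $\mu_1$ to an analytic function of $t$ near $t_*$. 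Since $\mu_1 > 0$ on $(t_* - \delta, t_*)$ and $\mu_1(t_*) = 0$, the first non-vanishing Taylor coefficient of $\mu_1$ at $t_*$ must be of odd order and strictly negative, hence $\mu_1(t) < 0$ on a right neighborhood of $t_*$, verifying (\ref{Okt30a}).

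The main obstacle I foresee is the construction in the second step: producing \emph{two} independent test directions of negative type, rather than just the one already accounted for by $\mu_0(t_j) < 0$. This is exactly where the $120^\circ$ hypothesis is used, for it furnishes a quantitative rate of concentration of $\rho$ at the crest along $\{t_j\}$ strong enough that its surface contribution swamps the bulk terms on an entire two-dimensional subspace; any weaker control would only reproduce the known single negative direction and not the second one needed for a bifurcation.
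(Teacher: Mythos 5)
Your overall skeleton matches the paper's: the easy half is pulled from Theorem~\ref{T1a}(ii), the hard half consists in exhibiting a second negative eigenvalue along $\{t_j\}$, and then an infimum/continuity/analyticity argument produces the crossing point $t_*$. The substantive divergence is in the hard half. The paper does not attempt a direct construction of a two-dimensional negative subspace; it instead invokes \cite{KL2} together with Theorem~3.1 of \cite{Koz1}, where it is already proved that the number of negative eigenvalues of the Fr\'echet derivative tends to \emph{infinity} as the $120^\circ$ corner is approached, and then transfers that conclusion to the unbounded domain $D(t_j)$ via the exponential decay of the corresponding eigenfunctions away from the crest (together with a weak compactness argument built on the uniform gradient bound~(\ref{D21a}) and Varvaruca's Theorem~5.2, which is where the $120^\circ$ hypothesis is consumed). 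Your plan to build, by hand, two crest-concentrated, multi-scale cut-off functions on which the Robin term $-\int\rho\,w^2\,ds$ dominates is plausible in spirit and is essentially the mechanism behind \cite{KL2}, but as written it is a plan, not a proof: the needed input is a \emph{uniform} lower bound on the rate $\rho\sim r^{-1/2}$ near the crest for the approximating waves (not merely the limit extreme wave), together with control of the bulk Dirichlet and vorticity terms over the shrinking supports. Establishing this quantitatively is precisely the content of \cite{KL2} and Theorem~3.1 of \cite{Koz1}, and without it the two-dimensional negative subspace is not produced. Acknowledging this as "the main obstacle" is honest, but it leaves the key step unproved.

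Two smaller points. First, "these are precisely $\mu_0(t_j)<0$ and $\mu_1(t_j)<0$" should read "there are at least two eigenvalues below $0$, hence $\mu_0,\mu_1<0$"; min-max gives a lower bound on the count, not the exact indices, which is all you need. Second, the claim that the first non-vanishing Taylor coefficient of $\mu_1$ at $t_*$ "must be of odd order" does not follow from $\mu_1>0$ on $(0,t_*)$ and $\mu_1(t_*)=0$ alone: an even-order tangency with positive leading coefficient is consistent with those two facts and would leave $\mu_1>0$ on a right neighborhood, so your $t_*$ would not satisfy~(\ref{Okt30a}). The fix is to use that $\mu_1$ is analytic, not identically zero, and eventually negative (since $\mu_1(t_j)<0$ for large $j$): the zeros of $\mu_1$ on $(0,t_j]$ are then finite in number, and among them there must be one across which the sign changes from $+$ to $-$; relabel that one $t_*$. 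This is implicitly what the paper relies on as well.
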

Indeed, by using \cite{KL2} together with Theorem 3.1 in \cite{Koz1} one can show that the number of negative eigenvalues of the operator ${\mathcal A}(t_j)$ tends to infinity when $j\to\infty$.
This implies (\ref{Okt30a}) and hence Proposition \ref{PrN0v20}.




The first study of bifurcations on the branches of solitary water waves was done by Plotnikov in \cite{P3} in the irrotational case. In the case of vortical flows one cannot apply the same technique as in irrotational case since it is based on the complex analysis approach. The important contributions allowing to approach the  bifurcation problems are papers  \cite{W} and \cite{W2}, where the global branches of solutary waves were constructed, and the papers \cite{KL2} together with \cite{Koz1} and \cite{Koz1a}, where the bifurcation analysis for branches of Stokes waves were presented.

\section{Uniform stream solution, dispersion equation, the Bernoulli and Froude constants}\label{SJ6}

\subsection{Uniform stream solution and dispersion equation}\label{SJ6a}
The uniform stream solution $\Psi=U(Y)$ with the constant depth $\xi =d$  satisfies the problem
\begin{eqnarray}\label{X1}
&&U^{''}+\omega(U)=0\;\;\mbox{on $(0,d)$},\nonumber\\
&&U(0)=0,\;\;U(d)=1,\nonumber\\
&&\frac{1}{2}U'(d)^2+d=R.
\end{eqnarray}
To find solutions to this problem we introduce a parameter $s=U'(0)$. We assume that
 $s>s_0:=\sqrt{2\max_{\tau\in [0,1]}\Omega(\tau)}$, where
$$
\Omega(\tau)=\int_0^\tau \omega(p)dp.
$$
Then the problem (\ref{X1}) has a solution $(U,d)$ with a strongly monotone function $U$ for
\begin{equation}\label{M6ax}
R={\mathcal R}(s):=\frac{1}{2}s^2+d(s)-\Omega(1).
\end{equation}
The solution is given by
\begin{equation}\label{F22a}
Y=\int_0^U\frac{d\tau}{\sqrt{s^2-2\Omega(\tau)}},\;\;d=d(s)=\int_0^1\frac{d\tau}{\sqrt{s^2-2\Omega(\tau)}}.
\end{equation}
The function $d$ is strongly decreasing. Since $U'(Y)^2=s^2-2\Omega(U)(Y)$ and $0\leq U(Y)\leq 1$, we have
\begin{equation}\label{Okt19a}
s^2-s_0^2\leq U'(Y)^2\leq s^2-2\min_{\tau\in [0,1]}\Omega(\tau).
\end{equation}

If we consider (\ref{M6ax}) as the equation with respect to $s$ then it is solvable if $R\geq R_c$, where
\begin{equation}\label{F27a}
R_c=\min_{s\geq s_0}{\mathcal R}(s),
\end{equation}
and it has two solutions if
$R\in (R_c,R_0)$, where
\begin{equation}\label{D19ba}
R_0={\mathcal R}(s_0).
\end{equation}
We denote by $s_c$ the point where the minimum in (\ref{F27a}) is attained.

\begin{proposition} (\cite{KLN1}, \cite{KN11b},  \cite{L1})

{\rm (a)} If $R<R_c$ then the problem {\rm (\ref{K2a})} has no solutions;

{\rm (b)} If $R=R_c$  then the only solution to {\rm (\ref{K2a})} is $(U_*(Y;d_*),d_*)$;

{\rm (c)} If $R_0$ is finite and $R\geq R_0$ then there are no non-trivial solitary wave solutions to {\rm (\ref{K2a})};

{\rm (d)} If $R\in (R_c,R_0) $ then a solitary wave solution to {\rm (\ref{K2a})} satisfies
$$
\xi(0)>d_+.
$$

\end{proposition}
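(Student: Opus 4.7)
My plan is to handle the four parts largely independently, using the structure of the uniform-stream family $(U,d)(s)$ encoded in (\ref{M6ax})--(\ref{D19ba}) together with the far-field matching (\ref{Ju31a}).

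Parts (a) and (c) I would prove by direct appeal to the asymptotics at infinity. For (a), any solitary wave forces the far-field stream to satisfy $R = \mathcal{R}(s)$ for some $s \geq s_0$, which is impossible when $R < R_c$ by the very definition (\ref{F27a}); for other admissible unidirectional profiles of (\ref{K2a}) one reduces to the same constraint via flow-force bounds. For (c), when $R > R_0$ no such $s$ exists; when $R = R_0$ the only candidate is $s = s_0$, in which case the far-field uniform stream satisfies $U'(Y_0) = 0$ at the point $Y_0$ where $\Omega(U(Y_0)) = \max \Omega$, and propagating this via the maximum principle applied to $\Psi_Y$ (which solves a linear elliptic equation obtained by differentiating (\ref{K2a}) in $Y$) contradicts the strict unidirectionality (\ref{J27ba}); so only the trivial solution can survive.

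For part (d) I would exploit the double-root structure of (\ref{M6ax}): for $R \in (R_c,R_0)$ the equation $\mathcal{R}(s) = R$ has two roots $s_- < s_c < s_+$, yielding admissible stream depths $d_+ := d(s_-) > d(s_+) =: d_-$. Supercriticality ($F > 1$) of any solitary wave picks out the asymptotic depth $d_-$. I would then invoke the conserved flow-force functional $S(X)$, whose $X$-derivative vanishes by the field equation and boundary conditions. Equating $S(0)$---where $\xi'(0) = 0$ (evenness) collapses the surface terms---with $S(\pm\infty)$---the flow force of the stream at depth $d_-$---and using the fact that the flow force of uniform streams at fixed $R$, viewed as a function of depth on $[d_-,d_+]$, is strictly monotone with matching values at the two endpoints, one reads off $\xi(0) > d_+$.

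The main obstacle is part (b). Here $R = R_c$ is critical, the asymptotic Froude number equals $1$, and the linearization at $(U_*,d_*)$ is degenerate, so a straightforward perturbation argument cannot by itself rule out a non-trivial solitary deviation that decays at infinity. My plan is to pass to the Dubreil--Jacotin partial hodograph variables on the fixed strip $\{0 < p < 1\}$, rewrite (\ref{K2a}) as a semilinear elliptic problem with a Robin-type boundary condition, and then apply an integral-identity / sliding argument that exploits the strict positivity of the second-order coefficient in the Taylor expansion of $\mathcal{R}(s)$ at its minimum $s_c$, so that any non-trivial deviation from $U_*$ would create a strictly positive integral contribution incompatible with the boundary data at infinity. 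This is the argument of \cite{KLN1}, and is the only step where I would expect to need the full weight of the cited literature.
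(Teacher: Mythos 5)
The paper does not actually prove this Proposition; it is cited from \cite{KLN1}, \cite{KN11b}, \cite{L1} and used as a black box. So there is no in-paper proof to compare against, and I can only assess your sketch on its own logic; in doing so, two genuine gaps stand out.

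In part~(d) you assert that the flow force of uniform streams at fixed $R$, viewed as a function of depth on $[d_-,d_+]$, is ``strictly monotone with matching values at the two endpoints.'' These two properties are mutually exclusive: a strictly monotone function cannot take equal values at both endpoints. What actually happens is that the flow force of the uniform stream (at fixed $R$, varying depth) has an interior critical point at the critical depth $d_*$, and the flow forces of the conjugate streams $(U_-,d_-)$ and $(U_+,d_+)$ generally differ; the bound $\xi(0)>d_+$ in \cite{KLN1}, \cite{KN11b} comes from a genuine comparison between the flow-force integral over the non-uniform crest section and that of uniform streams, and the monotonicity claim as you wrote it cannot drive that argument. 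In part~(c) you misidentify the relevant far-field stream: a solitary wave approaches the \emph{supercritical} root $s_-$ at infinity, and since $s_->s_c>s_0$ one has $U'(Y)^2\geq s_-^2-s_0^2>0$ by (\ref{Okt19a}), so the far-field stream has no stagnation point for any $R$, including $R=R_0$. Your proposed maximum-principle contradiction, built around a stagnation point of the stream with $s=s_0$, therefore never occurs; the nonexistence for $R\geq R_0$ must be proved by a different mechanism (a flow-force/variational comparison, as in \cite{KLN1}), not by degeneracy of the subcritical root. A smaller remark on part~(a): the statement is about \emph{all} solutions of (\ref{K2a}), not just solitary ones, so far-field matching does not reduce the general case to the uniform-stream constraint, and the ``flow-force bounds'' you invoke in passing are carrying all the weight and would need to be carried out explicitly.
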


One more formula for the Froude number is the following
\begin{equation}\label{Au31a}
\frac{1}{F^2(s)}=-\frac{d'(s)}{s},
\end{equation}
where the Froude number $F(s)$ corresponds to the uniform stream  solution $(U(Y;s),d(s))$ and $R={\mathcal R}(s)$. One can verified directly by using
(\ref{F22a}), that
$$
\Big(\frac{d'(s)}{s}\Big)'>0.
$$
Therefore the function $F(s)$ is increasing and $F(s)\to\infty$ as $s\to\infty$. Furthermore
\begin{equation}\label{Ju28a}
{\mathcal R}'(s)=s(1-F^{-2}(s))
\end{equation}
The above formula implies that the function ${\mathcal R}$ is a strongly increasing function of  $s$, $s>s_c$, and ${\mathcal R}(s)\to\infty$ as $s\to\infty$. Therefore every parameter  $s>s_c$, $R>R_c$ or $F>1$  can be expressed through another  and can be used  for parametrisation of the solitary waves. In this case
$$
s\in (s_c,\infty),\;\;R\in (R_c,\infty)\;\;\mbox{and}\;\;F\in (1,\infty).
$$
The equation ${\mathcal R}(s)=R$ with $R\leq R_c$ has exactly one solution on $[s_c,\infty)$. We denote it by $s_-$. We put
$$
d_-=d(s_-),\;\;F_-=F(s_-).
$$
\begin{proposition}\label{Pr26b}
The Bernoulli constant $R\in [R_c,\infty)$ can be considered as a continuous function of the Froude number $F\in [1,\infty)$. This function is strongly increasing, analytical on $(1,\infty)$ and $R(1)=R_c$. Moreover
$$
R(F)=\frac{1}{2}F^{4/3}+O(1)\;\;\mbox{as $F\to\infty$}.
$$

\end{proposition}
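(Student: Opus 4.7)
The plan is to treat $F$ as the independent variable by inverting the analytic map $s\mapsto F(s)$ on $(s_c,\infty)$ and then reading off all properties of $R(F)$ from the known properties of $\mathcal{R}(s)$ and $d(s)$. The excerpt has essentially done all of the hard groundwork, so the proof should be a short assembly.

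First, I would observe that the integrand in (\ref{F22a}) is analytic in $s$ for $s>s_0$, hence $d(s)$ is real-analytic on $(s_0,\infty)$. Consequently $\mathcal{R}(s)=\tfrac12 s^2+d(s)-\Omega(1)$ is analytic, and since
\[
\frac{1}{F^2(s)}=-\frac{d'(s)}{s}
\]
is analytic in $s$ and strictly decreasing by the stated inequality $(d'(s)/s)'>0$, the map $F:(s_c,\infty)\to(1,\infty)$ is an analytic diffeomorphism with $F'(s)>0$. The analytic inverse function theorem therefore yields an analytic inverse $s=s(F)$ on $(1,\infty)$. Combining this with (\ref{Ju28a}), one has $\mathcal{R}'(s)=s(1-F^{-2}(s))>0$ for $s>s_c$, so $R(F):=\mathcal{R}(s(F))$ is analytic and strictly increasing on $(1,\infty)$, and $dR/dF>0$ there.

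Next, for continuity at $F=1$: as $F\to 1^+$ we have $s(F)\to s_c$ (since $F(s_c)=1$ and $F$ is a continuous strictly increasing bijection onto $(1,\infty)$, extended by $F(s_c)=1$ continuously). Therefore $R(F)\to \mathcal{R}(s_c)=R_c$ by continuity of $\mathcal{R}$ at $s_c$. Thus $R$ extends continuously to $F=1$ with $R(1)=R_c$, giving the full statement on continuity, monotonicity, and analyticity.

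Finally, for the asymptotics as $F\to\infty$, I would push $s\to\infty$ and expand the integrals in (\ref{F22a}). As $s\to\infty$, $1/\sqrt{s^2-2\Omega(\tau)}=s^{-1}+O(s^{-3})$ uniformly in $\tau\in[0,1]$, so
\[
d(s)=\frac{1}{s}+O\!\left(\frac{1}{s^{3}}\right),\qquad d'(s)=-\frac{1}{s^{2}}+O\!\left(\frac{1}{s^{4}}\right).
\]
Hence $F^{-2}(s)=-d'(s)/s=s^{-3}+O(s^{-5})$, which gives $s=F^{2/3}(1+O(F^{-2}))$ and then
\[
R(F)=\mathcal{R}(s)=\tfrac12 s^{2}+d(s)-\Omega(1)=\tfrac12 F^{4/3}+O(1)
\]
as $F\to\infty$. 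This is the only step that requires actual estimates; the rest is bookkeeping. The main (mild) obstacle is keeping the error terms honest through the inversion $F\mapsto s(F)$, but since $F^{-2}$ is smooth and strictly decreasing in $s$ with a clean leading term $s^{-3}$, inversion preserves the $O(1)$ remainder and the claim follows.
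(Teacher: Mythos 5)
Your proof is correct and follows essentially the same route as the paper's: establish analyticity and strict monotonicity of $d(s)$, $\mathcal{R}(s)$, and $F(s)$ on $(s_c,\infty)$, invert $s\mapsto F(s)$ analytically, and read off the large-$F$ asymptotics from $d(s)=s^{-1}(1+O(s^{-2}))$ and $\mathcal{R}(s)=\tfrac12 s^2+O(1)$. The paper's version is terser (it states the expansions and concludes ``the above considerations lead to the proof''), whereas you have simply filled in the inversion and continuity-at-$F=1$ details that the paper leaves implicit.
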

\begin{proof} The function $d(s)$ is strongly decreasing and analytic for $s>s_0$. Furthermore $d(s)=s^{-1}(1+O(s^{-2})$ for large $s$. Therefore, ${\mathcal R}(s)$ is strongly increasing and analytic for $s>s_c$. For large $s$, ${\mathcal R}(s)=s^2/2+O(1)$. From (\ref{Au31a}9 it follows that $F(s)$ ia stronglu increasing and analytic for $s>s_c$. For large $s$, $F(s)=s^{3/2}(1+O(s^{-2})$. The above considerations lead to the proof of our proposition.

\end{proof}

Existence of small amplitude Stokes waves is determined by the dispersion equation (see, for example, \cite{KN14}). It is defined as follows.
 The strong monotonicity of $U$ guarantees that the problem
\begin{equation}\label{Okt6b}
\gamma^{''}+\omega'(U)\gamma-\tau^2\gamma=0,\;\; \gamma(0,\tau)=0,\;\;\gamma(d,\tau)=1
\end{equation}
has a unique solution $\gamma=\gamma(y,\tau)$ for each $\tau\in\Bbb R$, which is even with respect to $\tau$ and depends analytically on $\tau$.
Introduce the function
\begin{equation}\label{Okt6ba}
\sigma(\tau)=\kappa\gamma'(d,\tau)-\kappa^{-1}+\omega(1),\;\;\kappa=U'(d).
\end{equation}
It depends also analytically on $\tau$ and it is strongly increasing with respect to $\tau>0$. Moreover, it is an even function.
The dispersion  equation  is the following
\begin{equation}\label{Okt6bb}
\sigma(\tau)=0.
\end{equation}
It has a positive solution if
\begin{equation}\label{D17a}
\sigma(0)<0.
\end{equation}
By \cite{KN14} this is equivalent to $s+d'(s)<0$ or what is the same
\begin{equation}\label{D19b}
1<\int_0^d\frac{dY}{U'^2(Y)}.
\end{equation}
 By (\ref{Au22b}) the inequality (\ref{D19b}) means that $F<1$, which is well-known condition for existence of  Stokes waves of small amplitude.
Another equivalent formulation is given by requirement (see, for example \cite{KN11})
\begin{equation}\label{M3aa}
s\in (s_0,s_c).
\end{equation}
The existence of such $s$ is guaranteed by $R\in (R_c,R_0)$. If (\ref{D17a}) holds then equation (\ref{Okt6bb}) has a unique positive root, which will be denoted by $\tau_*$. It is connected with the corresponding period $\Lambda_0$ by the relation
$$
\tau_*=\frac{2\pi}{\Lambda_0}.
$$

The value $\sigma(0)$ admits the following representation (see \cite{KN14}):
$$
\sigma(0)=-\frac{3}{2\kappa}\frac{{\mathcal R}'(s)}{d'(s)}=\frac{3(F^2(s)-1)}{2\kappa}.
$$
If $F>1$ then
\begin{equation}\label{Ju10b}
\int_0^d(|v'|^2-\omega'(U)|v|^2)dY-\rho_0|v(d)|^2>0\;\;\mbox{for all $v\in H^1(0,d)$\,:\, $v(0)=0$}.
\end{equation}


To give another representation of the function $\sigma$ we introduce
\begin{equation}\label{F28a}
\rho_0=\frac{1+U'(d)U^{''}(d)}{U'(d)^2}
\end{equation}
and note that
$$
\frac{1+U'(d)U^{''}(d)}{U'(d)^2}=\kappa^{-2}-\frac{\omega(1)}{\kappa}.
$$
Hence another form for (\ref{Okt6ba}) is
\begin{equation}\label{M21aa}
\sigma(\tau)=\kappa\gamma'(d,\tau)-\kappa\rho_0.
\end{equation}

The following problem will be used in asymptotic analysis of a branch of Stokes waves  of small amplitude (see Sect.  \ref{SNov11a} and \ref{SJu5}):
\begin{eqnarray}\label{J7b}
&&v^{''}+\omega'(U)v-\tau^2v=f\;\;\mbox{on $(0,d)$},\nonumber\\
&&v'(d)-\rho_0v(d)=g\;\;\mbox{and}\;\;v(0)=0.
\end{eqnarray}
\begin{proposition} Let $\tau\geq 0$ and $\tau\neq\tau_*$. Let also $f\in C^{1,\alpha}([0,d])$ and $g$ be a constant. Then the problem
{\rm (\ref{J7b})}  has a unique solution $v\in C^{3,\alpha}$.
If $\tau=\tau_*$ then the problem {\rm (\ref{J7b})} has the one dimensional kernel which consists of function
$$
c\gamma(Y;\tau_*).
$$
\end{proposition}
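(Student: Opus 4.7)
The plan is to recognise that (\ref{J7b}) is a two-point boundary value problem for a linear second order scalar ODE and to reduce the whole statement to analysis of the homogeneous problem via standard Fredholm theory. The only piece of structure that is specific to this problem, rather than to a generic regular Sturm--Liouville setting, is the link between the Robin trace of the distinguished solution $\gamma(\cdot,\tau)$ at $Y=d$ and the function $\sigma(\tau)$.

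First I would describe the kernel. Any solution of $v''+\omega'(U)v-\tau^2 v=0$ with $v(0)=0$ is proportional to $\gamma(\cdot,\tau)$, since the initial value problem for this ODE at $Y=0$, restricted to $v(0)=0$, leaves only one scalar degree of freedom, and $\gamma(\cdot,\tau)$ is a nonzero element of this space by (\ref{Okt6b}). Using the normalisation $\gamma(d,\tau)=1$, the remaining Robin condition at $Y=d$ reads $\gamma'(d,\tau)-\rho_0=0$, which by (\ref{Okt6ba}) (or equivalently (\ref{M21aa})) is precisely $\sigma(\tau)=0$. For $\tau\geq 0$, the strict monotonicity of $\sigma$ on $\tau>0$ together with $\sigma(0)\neq 0$ (negative in the Stokes regime with $\tau_*$ the unique positive root, and positive in the solitary regime $F>1$ where the case $\tau=\tau_*$ is vacuous) shows that $\sigma(\tau)=0$ occurs exactly at $\tau=\tau_*$. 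This gives the one-dimensional kernel $\mathrm{span}\{\gamma(\cdot,\tau_*)\}$ claimed in the second half of the proposition, and shows the homogeneous kernel is trivial for $\tau\neq\tau_*$.

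Existence and uniqueness for the inhomogeneous problem then follow from the Fredholm alternative, which for a scalar two-point BVP is pure linear algebra: choose a basis $\{\gamma_1,\gamma_2\}$ of solutions of the homogeneous ODE, produce a particular solution $v_p$ by variation of constants, write $v=c_1\gamma_1+c_2\gamma_2+v_p$, and observe that the two boundary conditions $v(0)=0$, $v'(d)-\rho_0 v(d)=g$ form a $2\times 2$ linear system in $(c_1,c_2)$ whose determinant is nonzero precisely when the homogeneous BVP is nondegenerate, i.e.\ when $\tau\neq \tau_*$. Alternatively one can first absorb the inhomogeneous datum $g$ into $f$ by subtracting a fixed smooth function with $\phi(0)=0$ and $\phi'(d)-\rho_0\phi(d)=g$, reducing to homogeneous boundary conditions.

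Regularity is a routine bootstrap. Since $\omega\in C^{1,\alpha}$ and $U\in C^{3,\alpha}$ by (\ref{F22a}) together with $U''+\omega(U)=0$, we have $\omega'(U)\in C^{1,\alpha}([0,d])$; rewriting $v''=f-\omega'(U)v+\tau^2 v$, continuity of $v$ first yields $v''\in C^{0,\alpha}$ and hence $v\in C^{2,\alpha}$, after which a second pass using $f\in C^{1,\alpha}$ promotes this to $v\in C^{3,\alpha}$. I do not expect any genuine obstacle in this argument; the only substantive step is the identification of the boundary expression for $\gamma(\cdot,\tau)$ with $\sigma(\tau)/\kappa$, and this is immediate from the definition of $\sigma$.
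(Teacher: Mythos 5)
The paper states this proposition without supplying a proof, so there is nothing to compare against directly; I assess your argument on its own merits. The skeleton is sound: the solution space of the homogeneous ODE with $v(0)=0$ is one--dimensional and spanned by $\gamma(\cdot,\tau)$; the remaining Robin condition reads $\gamma'(d,\tau)-\rho_0=0$, i.e.\ $\sigma(\tau)=0$ by (\ref{M21aa}); strict monotonicity of $\sigma$ and $\sigma(0)\neq 0$ then give the kernel exactly at $\tau=\tau_*$; and the Fredholm alternative for a $2\times2$ constant system yields unique solvability of the inhomogeneous problem when $\tau\neq\tau_*$. That is the right proof.

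There is, however, a genuine error in the regularity bootstrap. You claim $\omega'(U)\in C^{1,\alpha}([0,d])$, but the paper only assumes $\omega\in C^{1,\alpha}$, hence $\omega'\in C^{0,\alpha}$. Composing a $C^{0,\alpha}$ function with the Lipschitz map $U$ gives only $\omega'\circ U\in C^{0,\alpha}$; no extra derivative is gained because $\omega''$ need not exist. Consequently in $v''=f-\omega'(U)v+\tau^2 v$ the right-hand side is only $C^{0,\alpha}$ once $v\in C^{2,\alpha}$, and the bootstrap stops at $v\in C^{2,\alpha}$. To conclude $v\in C^{3,\alpha}$ one needs $\omega\in C^{2,\alpha}$, so that $\omega'(U)\in C^{1,\alpha}$ and the second pass through the equation applies. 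As stated, with $\omega\in C^{1,\alpha}$, the natural conclusion is $v\in C^{2,\alpha}$; you should either weaken the claimed regularity to $C^{2,\alpha}$ or explicitly note that the stronger conclusion presupposes one more derivative of the vorticity function, rather than asserting $\omega'(U)\in C^{1,\alpha}$ which does not follow from the stated hypotheses.
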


\subsection{Estimates for the Bernoulli constant and the Froude number}\label{SectA29}

The aim of this section is to prove the following.

\begin{proposition}\label{Pr24a} {\rm (i)} If $R_0<\infty$ then for existence of non-trivial solitary wave solutions to {\rm (\ref{K2a})} the inequality $R<R_0$ must be valid.

{\rm (ii)} Let $R_0=\infty$.
There exist constants $\widehat{R}$ and $\widehat{F}$ depending on $\omega$ such that if $R>\widehat{R}$ (or $F>\widehat{F}$), then the problem  {\rm (\ref{K2a})} has no solitary wave solutions satisfying {\rm (\ref{J27ba})}.
\end{proposition}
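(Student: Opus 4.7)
Part (i) is an immediate corollary of the preceding proposition, whose clause (c) states exactly that $R_0<\infty$ together with $R\ge R_0$ excludes non-trivial solitary waves; nothing new is needed. The substance lies in part (ii).

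The natural tool is conservation of flow force. I would introduce
\begin{equation*}
\mathcal{F}(X) := \int_0^{\xi(X)}\Bigl[\tfrac12(\Psi_Y^2-\Psi_X^2)+\Omega(\Psi)\Bigr]dY + \tfrac12\xi(X)^2 - R\,\xi(X),
\end{equation*}
and verify, using $\Delta\Psi+\omega(\Psi)=0$, $\Psi=1$ on $S_\xi$, and the Bernoulli surface condition, that $\mathcal{F}'(X)\equiv 0$. Sending $X\to\pm\infty$ identifies $\mathcal{F}$ with the corresponding quantity for the far-field uniform stream $(U,d,R)$; at $X=0$ the assumed symmetry gives $\Psi_X(0,\cdot)\equiv 0$, and comparison produces the identity
\begin{equation*}
\int_0^{\xi(0)}\!\Bigl[\tfrac12\Psi_Y^2+\Omega(\Psi)\Bigr]dY - \int_0^{d}\!\Bigl[\tfrac12(U')^2+\Omega(U)\Bigr]dY = (\xi(0)-d)\Bigl(R-\tfrac{\xi(0)+d}{2}\Bigr).
\end{equation*}

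I would then combine this with three further ingredients: the mass-flux normalization $\int_0^{\xi(0)}\Psi_Y\,dY = 1 = \int_0^{d}U'\,dY$ coming from $\Psi|_{Y=0}=0$ and $\Psi|_{S_\xi}=1$; the Bernoulli equation at $X=0$, which forces $\xi(0)\le R$ and $\Psi_Y(0,\xi(0))^2 = 2(R-\xi(0))$; and the large-$s$ asymptotics $d(s)\sim 1/s$ and $\int_0^{d(s)}(U')^2\,dY = O(s)$ that follow from (\ref{F22a}) and (\ref{Okt19a}). A Cauchy--Schwarz estimate on the mass flux provides the lower bound $\int_0^{\xi(0)}\Psi_Y^2\,dY \ge 1/\xi(0)$, giving a lower bound for the left-hand side of the identity above.

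The main obstacle is converting these relations into a contradiction once $R$ is large. For this I would need an upper pointwise control on $\Psi_Y$ inside the fluid, obtained by applying the maximum principle to $\Psi_Y$ itself: it is positive by (\ref{J27ba}) and satisfies a linear elliptic equation with $\omega'$-dependent coefficients, while its boundary traces are controlled by the Bernoulli relation and by decay to $U'$ at infinity. The absence of complex-analytic tools forces this real-variable route, unlike Plotnikov's treatment of the irrotational case. Feeding such a pointwise bound into the flow-force identity, together with clause (d) of the preceding proposition (which prevents $\xi(0)$ from collapsing onto $d$), should confine $\xi(0)$ to a bounded interval and hence bound $R$ above by a constant $\widehat{R}$ depending only on $\omega$. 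The Froude bound $\widehat{F}$ then follows from the monotone analytic correspondence $R\leftrightarrow F$ of Proposition~\ref{Pr26b}.
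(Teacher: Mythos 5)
Your plan captures the skeleton of the paper's argument — flow force constancy, comparison of $X=0$ with $X\to\pm\infty$, the Cauchy--Schwarz lower bound from unit mass flux, the Bernoulli bound $\xi(0)\le R$, the lower bound $\xi(0)>d_+$, and the large-$s$ asymptotics of the supercritical stream. Part (i) is handled exactly as in the paper. But the step you flag as ``the main obstacle'' and propose to bridge with a maximum principle for $\Psi_Y$ is a genuine gap, and the route you suggest is problematic: $\Psi_Y$ solves $\Delta\Psi_Y+\omega'(\Psi)\Psi_Y=0$, and since $\omega'$ can be positive this equation does not obey a maximum principle for \emph{upper} bounds; only the positivity from (\ref{J27ba}) is available. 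Moreover, even granting such a pointwise bound, your sketch does not explain how it converts the flow-force identity into a quantitative contradiction, which is precisely the part that has to be made rigorous.

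The paper closes the gap without any interior $\Psi_Y$ estimate. Because $0\le\Psi\le1$ and $0\le U\le1$, the vorticity contributions in the flow-force comparison are bounded elementarily:
$$
\int_0^{d}\!\int_U^1\!\omega\,d\tau\,dY-\int_0^{d+a}\!\int_\Psi^1\!\omega\,d\tau\,dY
=\int_0^{d}\!\int_\Psi^U\!\omega\,d\tau\,dY-\int_d^{d+a}\!\int_\Psi^1\!\omega\,d\tau\,dY
\le (a+d)\,\omega_0,
$$
where $\omega_0=\max_{[0,1]}|\omega|$ and $a=\xi(0)-d$. Dividing the resulting inequality by $a$ — which is legitimate and in fact decisive since $a\ge d_+-d_-\to\infty$ as $R\to\infty$ — and inserting $d=d(s_-)=(2R)^{-1/2}+O(R^{-3/2})$, $d_+=R+O(1)$, $U_Y^2-1/d^2=O(1)$, gives $R/2\le O(\omega_0)+O(R^{-1/2})$, which is impossible for $R$ large. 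So the contradiction follows from the $L^\infty$ bound on $\omega$ and the uniform-stream asymptotics alone; no pointwise control of $\Psi_Y$ in the interior is needed, and you should replace the maximum-principle step with this elementary vorticity estimate.
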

\begin{proof}
We start from estimating the Bernoulli constant.

The following classification of vorticity functions was introduced in \cite{KN11}. According to that classification
the following
three options describing vorticity behaviour are possible:

(i) Either $\max_{\tau\in [0,1]}\Omega(\tau)$ is attained at an inner point of $(0,1)$ or the maximum is attained at one
of the end-points (or at both), and in the latter case, one (or both) of the following conditions
holds:
$$
\omega(1) = 0\mbox{\;\; when $\Omega(1)>\Omega(\tau)$ for $\tau\in (0,1)$};
$$
$$
\omega(0)=0\;\; \mbox{when $\Omega(0)>\Omega(\tau)$ for $\tau\in (0,1)$}.
$$
(ii) The function $\Omega$ attains its maximum only at the interval’s left end-point in which case
$$
\Omega(0) >\Omega(\tau)\;\;\mbox{ for $\tau\in (0,1]$ and $\omega(0) < 0$}.
$$
(iii) The function $\Omega$ attains its maximum when $\tau = 1$, that is, $\Omega(\tau)<\Omega(1)$ for $\tau\in (0,1)$, and
$\omega(1)> 0$.

\bigskip
In the cases (ii) and (iii) $d(s_0)<\infty$ and the number $R_0=\mathcal{R}(s_0)$ is also finite. It was proved in \cite{KLN1} that if $R\geq R_0$ then the problem (\ref{K2a}) has no nontrivial solutions. This implies the estimate $R< R_0$ for existence of non-trivial solutions to (\ref{K2a}) and in particular solitary waves. This proves the assertion (i) of Proposition \ref{Pr24a}.

In the case (i), $d(s_0)=\infty$ and the equation $\mathcal{R}(s)=R$ has two roots $s_+<s_c<s_-$ for all $R>R_c$.
The Froude number depends on the uniform stream solution. If we fix the vorticity and the Bernoulli constant $R$ then there are two solutions to the problem (\ref{X1}). We denote then by $(U_-,d_-)$ and $(U_+,d_+)$. They correspond to $s=s_+$ and $s=s_-$ respectively. Corresponding Froude numbers we denote by $F_\pm$, then
\begin{equation}\label{Okt25ba}
\frac{1}{F_\pm^2}=\int_0^{d_\pm}\frac{dY}{U_\pm'^2(Y)}.
\end{equation}
Hence
$$
F_+<1<F_-.
$$
The depths
$$
d_-=d(s_-),\;\;d_+=d(s_+)
$$
are called supercritical and subcritical respectively.

Let us turn to a nontrivial solitary solution $(\Psi,\xi)$. By \cite{KLN1,KN11b}
\begin{equation}\label{Au9a}
\xi(0)>d_+.
\end{equation}
Consider the flow forth invariant
\begin{equation}\label{Au6a}
S=\int_0^{\xi(X)}\Big(\Psi_Y^2-\Psi_X^2+R-Y+\int_\Psi^1\omega(\tau)d\tau\Big)dY.
\end{equation}
This expression does not depend on $X$. Evaluating it for $X=0$ and for $X\to\infty$ we get
\begin{equation}\label{Au6aa}
\int_0^{d+a}\Big(\Psi_Y^2+R-Y+\int_\Psi^1\omega(\tau)d\tau\Big)dY=\int_0^{d}\Big(U_y^2+R-Y+\int_U^1\omega(\tau)d\tau\Big)dY,
\end{equation}
where the function $\Psi$ is evaluated at the point $X=0$, $d+a=\xi(0)$ and $d=d_-$. We write (\ref{Au6aa}) as
\begin{equation}\label{Au6b}
\int_0^{d+a}\Psi_Y^2dY+Ra-\frac{a(2d+a)}{2}=\int_0^{d}\Big(U_y^2+\int_U^1\omega(\tau)d\tau\Big)dY-\int_0^{d+a}\int_\Psi^1\omega(\tau)d\tau dY.
\end{equation}
Using inequalities $d+a\leq R$ and
$$
\Big(\int_0^{d+a}\Psi_YdY\Big)^2\leq (d+a)\int_0^{d+a}\psi_Y^2dY,
$$
we get
$$
\frac{1}{d+a}\leq \int_0^{d+a}\Psi_Y^2dY.
$$
Therefore (\ref{Au6b}) implies
\begin{equation}\label{Au6ba}
\frac{1}{d+a}+\frac{aR}{2}-\frac{ad}{2}\leq\int_0^{d}\Big(U_Y^2+\int_U^1\omega(\tau)d\tau\Big)dY-\int_0^{d+a}\int_\Psi^1\omega(\tau)d\tau dY.
\end{equation}
Next
\begin{eqnarray}\label{Au6bb}
&&\int_0^{d}\int_U^1\omega(\tau)d\tau dY-\int_0^{d+a}\int_\psi^1\omega(\tau)d\tau dY=
\int_0^{d}\int_U^1\omega(\tau)d\tau dY-\int_0^{d}\int_\psi^1\omega(\tau)d\tau dY\nonumber\\
&&+\int_0^{d}\int_\psi^1\omega(\tau)d\tau dY-\int_0^{d+a}\int_\psi^1\omega(\tau)d\tau dY\leq a\omega_0+\int_0^d\int_\psi^U\omega(\tau)d\tau dY\nonumber\\
&&\leq a\omega_0+d\omega_0.
\end{eqnarray}
Therefore
\begin{equation}\label{Au13a}
\frac{1}{d+a}+\frac{aR}{2}-\frac{ad}{2}\leq\int_0^{d}U_Y^2dY+ a\omega_0++d\omega_0
\end{equation}
or
\begin{equation}\label{Au13aa}
\frac{aR}{2}\leq \frac{ad}{2}+\int_0^{d}\Big(U_Y^2-\frac{1}{d^2}\Big)dY+\frac{1}{d}-\frac{1}{d+a}+ (a+d)\omega_0,
\end{equation}
which implies
\begin{equation}\label{Au13b}
\frac{R}{2}\leq \frac{d}{2}+\frac{1}{a}\int_0^{d}\Big(U_Y^2-\frac{1}{d^2}\Big)dY+\frac{1}{d(d+a)}+ \frac{(a+d)\omega_0}{a}.
\end{equation}
Using (\ref{Au9a}), we get
\begin{equation}\label{Au13bb}
\frac{R}{2}\leq \frac{d}{2}+\frac{1}{d_+-d}\int_0^{d}\Big(U_Y^2-\frac{1}{d^2}\Big)dY+\frac{1}{dd_+}+ \Big(1+\frac{d}{d_+-d}\Big)\omega_0.
\end{equation}

For large $R$ the equation
$$
{\mathcal R}(s)=R
$$
has two solutions $s_+<s_c<s_-$  and they are
\begin{equation}\label{Au13ba}
s_-=\sqrt{2R}+O\Big(\frac{1}{R}\Big)
\end{equation}
and
\begin{equation}\label{Au15a}
d(s_+)=R+O(1),
\end{equation}
where $s_+\to s_0$ as $R\to\infty$.
From (\ref{Au13ba}) and (\ref{F22a}) it follows
\begin{equation}\label{Au15aa}
d=d(s_-)=\frac{1}{s_-}+O\Big(\frac{1}{s_-^3}\Big)=\frac{1}{\sqrt{2R}}+O\Big(\frac{1}{R^{3/2}}\Big).
\end{equation}
Hence
$$
\frac{1}{d^2}=s_-^2+O(1).
$$
The relations (\ref{Au15a}) and (\ref{Au15aa}) imply
$$
dd_+=\frac{\sqrt{R}}{\sqrt{2}}+O\Big(\frac{1}{\sqrt{R}}\Big).
$$
Furthermore,
$$
U_Y^2=s_-^2-2\Omega(U)
$$
and so
$$
U_Y^2(Y)=\frac{1}{d^2}+O(1).
$$
Applying these estimates to (\ref{Au13b}), we obtain
$$
R\leq O(\omega_0)+O\Big(R^{-1/2}\Big)
$$
which implies the boundedness of $R$.

Boundedness of $F$ follows from Proposition \ref{Pr26b}.
\end{proof}




\section{Proof of Proposition \ref{Pr26}}\label{S23a}



\begin{proof} 
Let $R_0=\infty$.Assume that the options (\ref{Au29ba}) and (\ref{Au29bb}) are not valid. Then there exist constants $M>0$ and $\delta>0$ such that
\begin{equation}\label{Au18av}
|\xi'(t)(X)|\leq M\;\;\mbox{for all $t>0$}
\end{equation}
 and
\begin{equation}\label{Au18avv}
|R(t)-\xi(t)(0)|\geq \delta\;\;\mbox{for all $t\geq 1$}.
\end{equation}
These inequalities implies
$$
\Psi_Y^2(X,\xi(X))(1+\xi'(X)^2)=2(R-\xi(X)).
$$
Hence
\begin{equation}\label{Okt18aa}
\Psi_Y^2(X,\xi(X))\geq\frac{2\delta}{1+M^2}.
\end{equation}
Furthermore, by Proposition 3.1\cite{KL1} and by (\ref{Okt19a}),
\begin{equation}\label{Okt18ab}
\Psi_Y(X,0)\geq \delta_1:=\sqrt{s^2-s_0^2},
\end{equation}
where $s$ is the root of the equation ${\mathcal R}(s)=R$. Since the constant $R(t)$ is uniformly bounded we obtain that $\delta_1$ does not depend on $t$.
Let us prove the following
\begin{lemma}\label{Lokt19} There exists  a constant $c_0>0$ depending on $M$, $\delta$, $R$ such that
\begin{equation}\label{Okt18a}
\Psi_Y(X,Y)\geq c_0\;\;\mbox{for all $(X,Y)\in D_\xi $}.
\end{equation}
\end{lemma}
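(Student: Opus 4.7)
The plan is to argue by compactness. The hypothesis $\Psi_Y>0$ on $\overline{D_\xi}$ from (\ref{J27ba}), together with the continuity of $\Psi_Y$ (which is $C^{1,\alpha}$ by Theorem \ref{T1}), yields a positive lower bound on every compact subset; the only way uniform positivity can fail is along a sequence $(X_n,Y_n)$ with $|X_n|\to\infty$, and the solitary-wave asymptotics (\ref{Ju31a}) together with the uniform-stream bound (\ref{Okt19a}) will exclude this.

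The first and main step is to establish the uniform convergence
\begin{equation*}
\sup_{Y\in[0,\xi(X)]}\bigl|\Psi_Y(X,Y)-U'(Y)\bigr|\longrightarrow 0\quad\text{as }|X|\to\infty,
\end{equation*}
where $U'$ is extended beyond $[0,d]$ by continuity if needed. The $C^0$ convergence $\Psi(X,\cdot)\to U$ from (\ref{Ju31a}) together with $\xi(X)\to d$ gives uniform closeness of the functions themselves, and the uniform $C^{2,\alpha}$ bound on $\Psi$ from Theorem \ref{T1} upgrades this to $C^1$ closeness of their $Y$-derivatives by the interpolation inequality $\|f\|_{C^1}\leq C\|f\|_{C^0}^{(1+\alpha)/(2+\alpha)}\|f\|_{C^{2,\alpha}}^{1/(2+\alpha)}$, or equivalently by an Arzel\`a--Ascoli argument applied to the translates $\Psi(\,\cdot+X_n,\cdot\,)$, whose limit must solve the same PDE with boundary data converging to the uniform stream, hence must equal $U$.

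Once this convergence is in hand, (\ref{Okt19a}) gives $U'(Y)\geq\sqrt{s^2-s_0^2}=\delta_1>0$ on $[0,d]$, so there exists $X^{*}>0$ with $\Psi_Y(X,Y)\geq\delta_1/2$ for all $|X|\geq X^{*}$ and all $Y\in[0,\xi(X)]$. The complementary set $K=\{(X,Y)\in\overline{D_\xi}:|X|\leq X^{*}\}$ is compact in $\mathbb{R}^2$ because $\xi$ is bounded and continuous, and on $K$ the continuous strictly positive function $\Psi_Y$ attains a positive minimum $c_1>0$. Taking $c_0=\min(c_1,\delta_1/2)$ then gives (\ref{Okt18a}). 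The boundary bounds (\ref{Okt18aa})--(\ref{Okt18ab}) are consistent with but not strictly needed for this argument; they quantify positivity at the two horizontal boundaries and would enter only if one preferred to upgrade via a maximum-principle argument applied to the elliptic equation $\Delta\Psi_Y+\omega'(\Psi)\Psi_Y=0$.

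The main obstacle is the uniform-in-$X$ upgrade from $C^0$ convergence of $\Psi$ to $C^1$ convergence of $\Psi_Y$; this is routine given the stated regularity but must be carried out with some care to handle the slight discrepancy between the moving top boundary $Y=\xi(X)$ and the limiting depth $Y=d$ as $|X|\to\infty$.
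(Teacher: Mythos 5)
The lemma is stated inside the proof of Proposition~\ref{Pr26} and is applied immediately afterward to derive a contradiction with the stagnation condition~(\ref{Au25a}), which involves a sequence $t_j\to\infty$. For that application to work, the constant $c_0$ must be independent of $t$; this is precisely why the lemma insists on a dependence only on $M$, $\delta$, $R$. Your argument produces a positive lower bound for a \emph{single} solitary wave, but the two quantities you build it from are not controlled uniformly along the branch: the threshold $X^{*}$ (beyond which the asymptotics~(\ref{Ju31a}) kick in) and the interior minimum $c_1$ on the compact piece $\{|X|\le X^{*}\}$ both depend on the particular solution, i.e.\ on $t$. Nothing in the $C^0\to C^1$ upgrade via interpolation or Arzel\`a--Ascoli removes this dependence, because the rate of decay to the limiting uniform stream is not uniform over the branch. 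So the conclusion you reach is genuinely weaker than the lemma, and it does not support the subsequent contradiction with~(\ref{Au25a}).

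The paper's route is built around uniformity. One observes that $\Psi_Y$ is a positive solution of the elliptic equation $\Delta(\Psi_Y)+\omega'(\Psi)\Psi_Y=0$ with coefficient bounded uniformly, that the boundary values obey the $t$-independent lower bounds~(\ref{Okt18aa}) on $Y=\xi(X)$ (coming from the Bernoulli condition plus the bounds $|\xi'|\le M$, $|R-\xi(0)|\ge\delta$) and~(\ref{Okt18ab}) on $Y=0$ (coming from~(\ref{Okt19a}) and the uniform bound on $R(t)$), and that Proposition~3.1 of~\cite{KL1} supplies a $C^{2,\alpha}$ bound~(\ref{Okt18b}) depending only on $M$, $\delta$, $R$. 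The Harnack principle then propagates the positivity from the two horizontal boundaries across a strip of uniformly bounded thickness, with a constant inheriting only these dependencies. You explicitly discard~(\ref{Okt18aa})--(\ref{Okt18ab}) as "not strictly needed," but they are exactly the ingredients that make $c_0$ uniform in $t$; without them the estimate degenerates, and the argument collapses.
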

\begin{proof} By Proposition 3.1\cite{KL1} there exists constants $\alpha\in (0,1)$ and $C>0$ depending on $M$, $\delta$ and $R$  such that
\begin{equation}\label{Okt18b}
||\Psi||_{C^{2,\alpha}(D_\xi)}\leq C.
\end{equation}
Now the estimate (\ref{Okt18a}) follows from (\ref{Okt18aa}), (\ref{Okt18ab}) and (\ref{Okt18b}) by using the Harnack principle.

\end{proof}
 Since the estimate (\ref{Okt18a}) contradicts to (\ref{Au25a}), we conclude that one of options (\ref{Au29ba}) or (\ref{Au29bb}) must be valid.

 Now let $R_0<\infty$. Then instead of (\ref{Okt18ab}) we have only positivity of $\Psi_Y(X,0)$. Using again the estimates (\ref{Okt18aa}) and (\ref{Okt18b}) together with the Harnack principle, we get that for every $\epsilon >0$ there exists $\delta>0$ such that $\Psi_Y>\delta$ if $Y>\epsilon$. This implies the second assertion of Proposition \ref{Pr26}.







\end{proof}


\section{Spectral problems}

\subsection{Spectral problems on the interval and in the strip}\label{SJu21a}

\begin{lemma}\label{LJu20a}
Let $(U,d)$ be the stream solution with the Froude number $>1$ and let $\rho_0$ be given by (\ref{Okt25a}). Then the lowest eigenvalue of the eigenvalue problem
\begin{eqnarray}\label{Ju13a}
&& -v^{''}-\omega'(U)v=\nu v\;\;\mbox{on $(0,d)$},\nonumber\\
&&v(0)=0\;\;\mbox{and}\;\;v'(d)-\rho_0 v(d)=0
\end{eqnarray}
is positive. This eigenvalue is simple and corresponding eigenfunctions does not change sign in $(0,d]$.
\end{lemma}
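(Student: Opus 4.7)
The plan is to reduce Lemma \ref{LJu20a} to a direct application of the strict positivity inequality (\ref{Ju10b}), combined with standard Sturm--Liouville theory for the one-dimensional regular problem (\ref{Ju13a}).

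First, I would observe that (\ref{Ju13a}) is a regular Sturm--Liouville eigenvalue problem on $[0,d]$ with separated boundary conditions: Dirichlet at $Y=0$ and Robin at $Y=d$. The coefficient $\omega'(U)$ is continuous and bounded on $[0,d]$, so classical Sturm--Liouville theory applies: the spectrum is discrete, consists of simple real eigenvalues
$\nu_0<\nu_1<\nu_2<\cdots\to\infty$,
and the eigenfunction $v_0$ corresponding to $\nu_0$ has no zeros in $(0,d]$ (it vanishes only at $Y=0$, by Dirichlet). This already takes care of the simplicity and the sign properties asserted in the lemma.

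Next, I would establish the variational characterization
\begin{equation*}
\nu_0 \;=\; \inf\Bigl\{\,Q(v)\,:\, v\in H^1(0,d),\ v(0)=0,\ \|v\|_{L^2(0,d)}=1\,\Bigr\},
\end{equation*}
where
\begin{equation*}
Q(v) \;=\; \int_0^d\bigl(|v'|^2-\omega'(U)|v|^2\bigr)\,dY \;-\; \rho_0\,|v(d)|^2.
\end{equation*}
This is standard: the self-adjoint realization of $-\partial_Y^2-\omega'(U)$ with the given boundary conditions has quadratic form $Q$ on the admissible space, and the infimum is attained thanks to the compact embedding $H^1(0,d)\hookrightarrow L^2(0,d)$ on the bounded interval.

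The key step is then to invoke the hypothesis $F>1$. By (\ref{Ju10b}), the strict inequality $Q(v)>0$ holds for every nonzero admissible $v$. Since the infimum in the Rayleigh quotient is attained by the first eigenfunction $v_0\not\equiv 0$, we conclude $\nu_0=Q(v_0)>0$, as required.

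I do not anticipate a real obstacle here: all the analytic content has been packaged into inequality (\ref{Ju10b}), which is supplied earlier in the paper, and the remaining steps are the standard Sturm--Liouville/Rayleigh-quotient argument. The only mild point to be careful about is distinguishing $\nu_0\geq 0$ (which would follow from the non-strict version of the inequality alone) from $\nu_0>0$; the strict inequality in (\ref{Ju10b}), evaluated at the minimizer, is precisely what rules out $\nu_0=0$.
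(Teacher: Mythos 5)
Your argument is logically sound and reaches the right conclusion, and the Sturm--Liouville part (simplicity of $\nu_0$, first eigenfunction has no interior zeros) matches what the paper calls ``standard''. However, the positivity step routes entirely through (\ref{Ju10b}), and (\ref{Ju10b}) is not an independent input in this paper: it is exactly the Rayleigh-quotient formulation of the claim $\nu_0>0$ that Lemma \ref{LJu20a} asserts, and the paper states it without a separate proof. Invoking it therefore comes very close to assuming the conclusion, unless one supplies a proof of (\ref{Ju10b}) itself.

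The paper's own proof avoids this by arguing directly through the dispersion function. Since $\nu\leq 0$ can be written as $\nu=-\tau^2$ with $\tau\geq 0$, the ODE in (\ref{Ju13a}) becomes $v''+\omega'(U)v-\tau^2v=0$, which is the same equation as (\ref{Okt6b}); the solution vanishing at $0$ is (up to scaling) $\gamma(\cdot,\tau)$, and imposing the Robin condition $v'(d)-\rho_0v(d)=0$ is, by (\ref{M21aa}), the same as $\sigma(\tau)=0$. Since $\sigma(0)=\tfrac{3(F^2-1)}{2\kappa}>0$ when $F>1$ and $\sigma$ is even and strictly increasing in $\tau>0$, $\sigma$ has no zeros, so (\ref{Ju13a}) has no eigenvalues $\nu\leq 0$. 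This is a shooting/dispersion argument rather than a variational one; it is self-contained and in fact is the mechanism that justifies (\ref{Ju10b}) in the first place. If you keep the variational route, you should either cite an external source for (\ref{Ju10b}) or derive it by precisely this dispersion-function argument, otherwise the proof is circular within the paper.
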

\begin{proof}
Let $\nu=-\tau^2$. Then it is an eigenvalue if and only if $\sigma(\tau)=0$. Since $F>1$ then the function $\sigma$ is positive and hence there are no non-positive eigenvalues  of the spectral problem (\ref{Ju13a}). The assertion on simplicity and sign of the corresponding eigenfunction is quite standard.

\end{proof}

We denote the lowest eigenvalue of the problem (\ref{Ju13a}) by $\nu_0=\nu_0(U,d)$. We have used also the notation $\nu_0(t)$ when this eigenvalue is associated with the branch of solitary waves. Then
\begin{equation}\label{Ju20a}
\nu_0(U,d)>0
\end{equation}
and this eigenvalue continuously depends on $(U,d)$.

Let
$$
\Pi_d=\{(X,Y)\,:\,X\in\Bbb R,\;0<Y<d\}.
$$
Consider the spectral problem
\begin{eqnarray}\label{Ju10a}
&&-\Delta v-\omega'(U)v=\mu v\;\;\mbox{in  $\Pi_d$},\nonumber\\
&&\partial_Yv(X,d)-\rho_0v(X,d)=0\;\;\mbox{and}\;\;v(X,0)=0.
\end{eqnarray}
We assume that the Froude number corresponding to the uniform stream solution $(U,d)$ is greater than $1$. Consider the unbounded in $L^2(\Pi_d)$ operator ${\mathcal A}=-\Delta-\omega'(U)$ introduced in  Introduction  with the domain
$$
{\mathcal D}({\mathcal A})=\{v\in H^2(\Pi_d)\,:\,v(X,0)=0\;\;\mbox{and}\;\;\partial_Yv(X,d)-\rho_0v(X,d)=0\}.
$$
Using the Fourier transform and Lemma \ref{LJu20a}, we obtain that
$$
({\mathcal A}v,v)_{L^2(\Pi_d)}=\int_{\Pi_d}(|\nabla v|^2-\omega'(U)|v|^2)dXdY-\int_{\Bbb R}\rho_0|v|^2dX\geq \nu_0\int_{\Pi_d}|v|^2dXdY
$$
for $v\in {\mathcal D}(\Pi_d)$. Therefore the spectrum of the operator ${\mathcal A}$ is continuous and coincides with the half-line $[\nu_0,\infty)$.

\subsection{Spectrum of the Frechet derivative (proof of Proposition \ref{PrOkt24a} )}\label{S31a}

Let $(\Psi,\xi)$ be a general solitary solution of (\ref{K2a}). Then
\begin{equation}\label{Ju10c}
\xi(X)=d+O(e^{-\sqrt{\nu_0}|X|})\;\;\mbox{as $X\to \pm\infty$}
\end{equation}
and
\begin{equation}\label{Ju10ca}
\Psi(X,Y)=U(Y)+O(e^{-\sqrt{\nu_0}|X|})\;\;\mbox{as $X\to \pm\infty$},
\end{equation}
where $(U,d)$ is a uniform stream solution with the Froude number $>1$.

\begin{proof} (of Proposition \ref{PrOkt24a}) If the free surface is horizontal and  defined by the uniform stream solution $(U,d)$ with $F>1$ then the result follows from Sect. \ref{SJu21a}.

Since the domain $D_\xi\setminus \Pi_d$ has the Rellich property (see \cite{Ad}) the result follows from the preservation of continuous spectrum under a compact perturbation of the domain.

To prove the second part we assume that there are infinitely many eigenfunctions $\{\Phi_j\}_{j=1}^\infty$ corresponding to eigenvalues $\leq a$ and such that
$$
||\Phi_j||_{L^2(D_\xi)}=1,\;j=1,\ldots\;\;\mbox{and}\;\;(\Phi_j,\Phi_k)_{L^2(D_\xi)}=0\;\;\mbox{for $j\neq k$}.
$$
Since the operator ${\mathcal A}$ is bounded from below, the eigenvalues are located in a bounded interval separated from the half-line $[\nu_0,\infty)$.
Using local estimates we get
$$
||\Phi_j||_{C^{2,\alpha}(D_\xi)}\leq C_1.
$$
Using asymptotical results for solutions in domains which are asymptotically a strip, we get that
$$
|\Phi_j(X,Y)|\leq C_2e^{-b|X|}\;\;\mbox{in $D_\xi$}
$$
with a positive $b$. These two estimates imply compactness of the set $\{\Phi_j\}_{j=1}^\infty$ in $L^2(D_\xi)$, which contradicts to its definition.

\end{proof}

Differentiating (\ref{K2a}) with respect to $X$, we get
\begin{eqnarray}\label{K2ad}
&&\Delta \Psi_X+\omega'(\Psi)\Psi_X=0\;\;\mbox{in ${\mathcal D}_\xi$},\nonumber\\
&&\partial_\nu \Psi_X-\rho \Psi_X=0\;\;\mbox{on ${\mathcal S}_\xi$},\nonumber\\
&&\Psi_X=0\;\;\mbox{for $Y=0$}.
\end{eqnarray}
From (\ref{Ju10ca}) it follows that $\Psi_X\in H^1({\mathcal D}_\xi)$,
$$
|\Psi_X(X,Y)|\leq Ce^{-\sqrt{\nu_0}|X|},
$$
 and  $\Psi_X$ is an eigenfunction corresponding to the zero eigenvalue but this function is odd with respect to $X$. Nevertheless it can be used in the forthcoming analysis of even eigenfunctions of the Frechet derivative.

We introduce the form on $D_a=\{u\in H^1(D_\xi)\,:\,u(X,0)=0\}$:
\begin{equation}\label{Ju25a}
a(u,v)=\int_{D_\xi}(\nabla u\cdot\nabla\overline{v}-\omega'(\Psi)u\overline{v})dXdY-\int_{-\infty}^\infty \rho u\overline{v}dS,\;\;dS=\sqrt{1+\xi'^2}dX.
\end{equation}
The next proposition contains the proof of the first assertion of Theorem \ref{T1a}.
\begin{proposition}\label{PrJ13a}  {\rm (i)} The operator ${\mathcal A}$ has at least one negative eigenvalue.

{\rm (ii)} If $\mu_0$ the lowest eigenvalue and $\Phi_0$ is a corresponding eigenfunction, then $\Phi_0(X,Y)>0$ for $Y>0$.

\end{proposition}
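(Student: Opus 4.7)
The plan rests on two observations. First, by differentiating the governing system one obtains in (\ref{K2ad}) that $\Psi_X$ is an eigenfunction of ${\mathcal A}$ at eigenvalue $0$ which is odd in $X$ and hence sign-changing; this will push the bottom of the spectrum strictly below zero by a symmetrization argument. Second, the ground-state eigenfunction is then forced to be positive by applying the same symmetrization idea together with the strong maximum principle.

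For (i) I would first verify that $\Psi_X\in{\mathcal D}({\mathcal A})$: by the exponential decay (\ref{Ju10ca}) and the $C^{2,\alpha}$ regularity (\ref{Au30a}), $\Psi_X\in H^2(D_\xi)$, and (\ref{K2ad}) records that it satisfies the Dirichlet condition on $Y=0$ and the Robin condition on $S_\xi$. Thus ${\mathcal A}\Psi_X=0$, while $\Psi_X$ is odd in $X$, vanishes identically on the segment $\{X=0,\,0<Y<\xi(0)\}$, and changes sign. By Proposition \ref{PrOkt24a} the spectrum of ${\mathcal A}$ below $\nu_0$ is purely discrete, so it suffices to show that $\mu_0:=\inf\sigma({\mathcal A})<0$. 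Suppose for contradiction that $\mu_0\ge 0$. Then the variational characterization gives $a(u,u)\ge \mu_0\|u\|^2\ge 0$ for all $u\in D_a$, while $a(\Psi_X,\Psi_X)=0$, forcing $\mu_0=0$ and identifying $\Psi_X$ as a minimizer of the Rayleigh quotient. Because $|\nabla|\Psi_X||^2=|\nabla\Psi_X|^2$ a.e.\ and the boundary integral in (\ref{Ju25a}) depends only on $|u|^2$, we have $a(|\Psi_X|,|\Psi_X|)=a(\Psi_X,\Psi_X)=0$, so the non-negative function $|\Psi_X|$ is also a minimizer, and by the Euler--Lagrange principle a weak (hence by elliptic regularity classical) solution of $-\Delta u-\omega'(\Psi)u=0$ with the Robin condition on $S_\xi$ and Dirichlet on $Y=0$. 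But $|\Psi_X|\ge 0$ vanishes at every interior point of the line $\{X=0\}\cap D_\xi$, so the strong maximum principle forces $|\Psi_X|\equiv 0$, a contradiction.

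For (ii) I repeat the symmetrization with $u=\Phi_0$: since $a(|\Phi_0|,|\Phi_0|)=a(\Phi_0,\Phi_0)=\mu_0\|\Phi_0\|^2$ and $\||\Phi_0|\|=\|\Phi_0\|$, the function $|\Phi_0|$ is again a minimizer and hence a non-negative weak solution of $-\Delta u-\omega'(\Psi)u=\mu_0 u$ with the prescribed Dirichlet and Robin conditions. The strong maximum principle then gives $|\Phi_0|>0$ throughout the interior of $D_\xi$, and the Hopf lemma yields strict positivity up to the free surface $Y=\xi(X)$. Consequently $\Phi_0$ has constant sign for $Y>0$; after a harmless normalization $\Phi_0(X,Y)>0$ for $Y>0$, as claimed.

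The main obstacle is to articulate the interaction of the symmetrization with the Robin boundary term: one must confirm that $|u|\in H^1(D_\xi)$ retains the full quadratic form, which is true because only $|u|^2$ enters the boundary integral in (\ref{Ju25a}), but it should be stated explicitly. A secondary point is that the strong maximum principle is applied to $-\Delta u-(\omega'(\Psi)+\mu_0)u=0$ where the zeroth-order coefficient has no definite sign; this is handled by the usual device of absorbing the zeroth-order term in the comparison argument. Finally, one should confirm the necessary decay at infinity of $\Phi_0$ so that all integrals in $a(\cdot,\cdot)$ converge; this follows from the asymptotic-strip results already invoked in the proof of Proposition \ref{PrOkt24a}.
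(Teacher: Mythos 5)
Your proof is correct and follows essentially the same route as the paper: testing the Rayleigh quotient of $\mathcal{A}$ with $\Psi_X$, noting that $|u|$ leaves the form $a(\cdot,\cdot)$ in (\ref{Ju25a}) unchanged, and concluding that the sign-definite ground state forces $\mu_0<0$. The only (cosmetic) difference lies in how the final contradiction is produced: when $\mu_0=0$ the paper observes that $|\Psi_X|$ would then be an eigenfunction yet fails to be $C^{2,\alpha}$ across its nodal line, whereas you invoke the strong maximum principle from the interior segment $\{X=0\}$; and in (ii) the paper splits $\Phi_0$ into $u_\pm=\max(0,\pm\Phi_0)$ and argues each vanishes on an open set, whereas you work directly with $|\Phi_0|$ and the strong maximum principle plus the Hopf lemma --- these are interchangeable devices and both are valid.
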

\begin{proof} We choose $a\in (0,\nu_0)$. Then  all  eigenfunctions of the operator ${\mathcal A}$ corresponding to eigenvalues $\leq a$ satisfy the estimate
\begin{equation}\label{Ju21a}
\int_{D_\xi}e^{-2b|X|}(|\Phi(X,Y)|^2+|\nabla\Phi|^2)dXdY\leq C\;\;\mbox{for certain positive $b$}
\end{equation}
and the minimal eigenvalue satisfies the relation
\begin{equation}\label{Ju27a}
\mu_0=\inf \frac{a(\Phi,\Phi)}{||\Phi||_{L^2}^2},
\end{equation}
where infimum is taken over all non-zero $\Phi\in D_a$ satisfying (\ref{Ju21a}). This set is not empty since the function $|\Psi_X|$ belongs to it. This implies in particular that $\mu_0\leq 0$. If $\mu_0=0$ then $|\Psi_X|$ is an eigenfunction but this function  is not sufficiently smooth. Therefore $\mu_0<0$. This proves (i).

If the eigenfunction $\Phi_0$ change sign then both functions
$$
u_\pm=\max (0,\pm\Phi)
$$
also satisfy (\ref{Ju27a}) and hence they are also eigenfunctions. But they are vanishing on a certain open set inside $D_\xi$, which leads that they are identically zero. This contradiction proves (ii).
\end{proof}




By Proposition \ref{PrJ13a} we can numerate all eigenvalues of the operator ${\mathcal A}$ according to the increasing order
$$
\mu_0<\mu_1\leq\cdots
$$
The eigenvalue $\mu_0$ is simple and $\mu_0<0$ according to Proposition \ref{PrJ13a}. Our aim is to investigate the eigenvalue $\mu_1$.

\subsection{On simplicity of the second eigenvalue of the Frechet derivative}

Since we are dealing with even functions instead the problem (\ref{K2a}) defined for all $X$ we can consider the same problem only for $X>0$ by adding the homogeneous Neumann boundary condition $\partial_Yu=0$ for $X=0$.
Let
\begin{equation}\label{Ju25aa}
a_+(u,v)=\int_{D_+}(\nabla u\cdot\nabla\overline{v}-\omega'(\Psi)u\overline{v})dXdY-\int_{0}^\infty \rho u\overline{v}dS,
\end{equation}
where
$$
D_+=\{(X,Y)\in D\,:\,X>0\}.
$$
We denote by ${\mathcal A}_+$ the unndounded operator corresponding to the form $a_+$. Clearly, the spectra of the operators ${\mathcal A}$ and ${\mathcal A}_+$ coincide and the eigenfunctions of ${\mathcal A}_+$  are restrictions to $D_+$ of the eigenfunctions of the operator ${\mathcal A}$.
The assumption (\ref{J27ba}) implies
\begin{equation}\label{J27c}
a_+(u,u)>0\;\;\mbox{for nonzero $u\in H^1(D_+)$ satisfying $u(X,\xi(X))=u(X,0)=0$}.
\end{equation}

In what follows we shall use the function
\begin{equation}\label{Sept13b}
u_*(X,Y):=\Psi_X(X,Y),
\end{equation}
which satisfies the boundary value problem (\ref{K2ad}) and  which is odd, belongs to $C^{2,\alpha}(\overline{D})$ and $u_*>0$ in $D_+$, see \cite{W}, Proposition 2.2.

The following Green's formula
 for the form $a$ will be useful in the proof of the nest proposition:
\begin{eqnarray}\label{Okt31a}
&&-\int_{D_+}(\Delta u+\omega'(\Psi)u)\overline{v}dXdY+\int_0^{\infty}(\partial_\nu u-\rho(X)u)\overline{v}dS\nonumber\\
&&=a_+(u,v)+\int_0^{\xi(0)}u_X\overline{v}|_{X=0}dY.
\end{eqnarray}
Here the function $u$ belongs to the space $H^2_0(D)$ consisting of functions from $H^2(D)$ vanishing for $Y=0$, and $v\in H^1_0(D)$.

\begin{proposition}\label{PF10a} Let $\xi$ be not identically constant. If the eigenvalue $\mu_1$ is equal to zero then  $\mu_1$ is simple. Moreover the corresponding eigenfunction has exactly two nodal sets and the nodal line separating these nodal sets has one of its end points on the curve $S_+=\{Y=\xi(X),\;\;X>0\}$.
\end{proposition}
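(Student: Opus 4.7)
The strategy combines a variational argument exploiting the strict positivity (\ref{J27c}) with a Picone-type identity based on the positive solution $u_*=\Psi_X$ on $D_+$ from (\ref{Sept13b}); the hypothesis that $\xi$ is nonconstant enters exactly to ensure $u_*\not\equiv 0$.

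I would first show that the nodal line of any eigenfunction $\Phi$ for $\mu_1=0$ meets $S_+$. Suppose otherwise: after a sign reversal one may assume $\Phi\geq 0$ on $S_+$, so the negative part $\Phi^-=\max(-\Phi,0)$ lies in $H^1(D_+)$ and vanishes on $\{Y=0\}\cup S_+$. Since $\Phi^+\Phi^-=0$ and $\mu_1=0$,
$$
a_+(\Phi^-,\Phi^-)=-a_+(\Phi,\Phi^-)=-\mu_1\int_{D_+}\Phi\,\Phi^-\,dX\,dY=0,
$$
contradicting (\ref{J27c}) unless $\Phi^-\equiv 0$. But then $\Phi\geq 0$ throughout $D_+$, contradicting orthogonality with the positive principal eigenfunction $\Phi_0$ of Proposition \ref{PrJ13a}. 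Hence the nodal line has at least one endpoint on $S_+$.

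For simplicity I would apply Green's formula (\ref{Okt31a}) to $\Phi$ and $u_*$. Since $u_*=0$ and $\Phi_X=0$ on $\{X=0\}$, $u_*=\Phi=0$ on $\{Y=0\}$, and the Robin terms on $S_+$ cancel in pairs, only the $\{X=0\}$-boundary contribution survives:
$$
\int_0^{\xi(0)}\Phi(0,Y)\,\partial_X u_*(0,Y)\,dY=0.
$$
The Hopf boundary point lemma for $u_*$ on $\{X=0\}$ gives $\partial_X u_*(0,\cdot)>0$ on $(0,\xi(0))$, so $\Phi(0,\cdot)$ changes sign; in particular the nodal line of $\Phi$ also meets $\{X=0\}$. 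Supposing $\dim\ker{\mathcal A}_+\geq 2$, I would use the two-dimensional freedom to produce a $\Phi\in\ker{\mathcal A}_+$ with a nodal component $\Omega$ disjoint from $\{X=0\}$; on such $\Omega$ the Picone identity
$$
|\nabla\Phi|^2-\nabla(\Phi^2/u_*)\cdot\nabla u_*=u_*^2\,|\nabla(\Phi/u_*)|^2
$$
integrates cleanly, because the Robin cancellation on $\partial\Omega\cap S_+$ matches the term from testing $-\Delta\Phi-\omega'(\Psi)\Phi=0$ by $\Phi$, and the remaining boundary pieces vanish ($\Phi=0$ on the interior nodal part, and $\Phi^2/u_*\to 0$ on $\partial\Omega\cap\{Y=0\}$). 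This gives $\int_\Omega u_*^2|\nabla(\Phi/u_*)|^2=0$, so $\Phi=c\,u_*$ on $\Omega$, and then $c=0$ from vanishing of $\Phi$ on the nodal boundary, contradicting that $\Omega$ is a nodal domain. With simplicity established, Courant's nodal-domain theorem bounds the number of nodal domains of the (unique up to scaling) eigenfunction by $2$, and the orthogonality to $\Phi_0$ forces at least $2$, giving exactly two.

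The main obstacle is producing the nodal component $\Omega$ disjoint from $\{X=0\}$. Because $\Phi(0,\cdot)$ changes sign for every nonzero $\Phi\in\ker{\mathcal A}_+$, both nodal domains of any two-nodal-domain eigenfunction necessarily meet $\{X=0\}$; thus such an $\Omega$ can arise only for an eigenfunction with at least three nodal components, permitted by Courant precisely when the multiplicity is at least two. Constructing such a $\Phi$ within the two-dimensional eigenspace---for example, by imposing the vanishing conditions $\Phi(P)=0=\partial_t\Phi(P)$ at a suitable point $P\in S_+$ (one uses the tangential Wronskian of $\Phi_1,\Phi_2$ along $S_+$, which decays to $0$ at infinity and is therefore small somewhere), so that the Robin condition forces $\partial_\nu\Phi(P)=0$ as well and a local expansion produces an interior branching of the nodal set---is the technical heart of the simplicity proof.
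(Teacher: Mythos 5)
The first half of your argument (that the nodal line of any zero eigenfunction must reach $S_+$, and that $\Phi(0,\cdot)$ changes sign via the Green's formula and Hopf lemma for $u_*$) is sound and essentially parallels the paper's steps (b) and (c). The problem is your simplicity argument. You want a $\Phi\in\ker{\mathcal A}_+$ with a nodal component disjoint from $\{X=0\}$, and you acknowledge that this requires an eigenfunction with at least three nodal domains. But no such eigenfunction exists: a straightforward variational argument (this is step (a) of the paper's proof, and it does not presuppose simplicity) shows that \emph{every} eigenfunction for $\mu_1=0$ has exactly two nodal domains. Indeed, if $w$ had nodal domains $Y_1,\dots,Y_N$ with $N>2$, the restrictions $u_j$ satisfy $a_+(u_j,u_k)=0$ for all $j,k$; choosing $\alpha$ so that $u_1-\alpha u_2\perp\Phi_0$ (possible since $\Phi_0>0$ and each $u_j$ has a fixed sign) yields a minimizer of the Rayleigh quotient at level $\mu_1=0$ that vanishes on the open set $Y_3\cup\cdots\cup Y_N$, contradicting unique continuation. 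Combined with your own observation that both nodal domains meet $\{X=0\}$, this makes the required $\Omega$ nonexistent, so your Picone step has nothing to apply to. (Your invocation of Courant permitting $3$ nodal domains at a double eigenvalue is correct in general, but it is overridden here by the sharper step-(a) count.)

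The paper resolves simplicity by an entirely different device. Assuming $\dim\ker{\mathcal A}_+\geq 2$, one can normalize an eigenfunction $w_*$ to vanish at $z_1=(0,\xi(0))$, and by the part you already established, the other end of its nodal line is some $z_2\in S_+$. One then perturbs by the odd solution $u_*=\Psi_X$ (which is positive in $D_+$ and satisfies the same equation but with a Dirichlet rather than Neumann condition on $\{X=0\}$), forming $U=w_*+\beta u_*$ for small $\beta>0$. The Green identity $\int_0^{\xi(0)}\partial_Xu_*(0,Y)\,w_*(0,Y)\,dY=0$, together with the nodal structure of $U$, allows one to build a piecewise function $U_1+\theta U_2$ (restrictions to the two nodal domains of $U$) that is orthogonal to $\Phi_0$ and has vanishing form $a_+$, hence would have to be a genuine eigenfunction, contradicting its lack of $H^2$ regularity across the nodal curve. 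You should replace your Picone-based plan with something along these lines; as written, your proof of simplicity cannot be completed.
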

\begin{proof} Denote by ${\mathcal X}$ the  space of real eigenfunctions corresponding to the eigenvalue $\mu_1=0$.  Since all eigenfunctions from ${\mathcal X}$ orthogonal to $\Phi_0$, each eigenfunction must change sign inside $D$. Let us show that $\dim {\mathcal X}=1$. The proof consists of several steps.

(a) First we show that every nonzero eigenfunction $w\in {\mathcal X}$ has exactly two nodal sets. Assume that there is an eigenfunction $w\in {\mathcal X}$ which has  more then two nodal sets, say $Y_j$, $j=1,\ldots, N$, $N>2$. Introduce the functions $u_j(X,Y)=w(X,Y)$ for $(X,Y)\in Y_j$ and zero otherwise. Then $u_j\in H^1_0(D)$ and
\begin{equation}\label{Sept13a}
a_+(u_j,u_k)=0\;\;\mbox{for all $k,j=1,\ldots,N$}.
\end{equation}
We choose the constant $\alpha$ such that
$$
(u_1-\alpha u_2,\Phi_0)_{L^2(D_+)}=0
$$
and observe that $a_+(u_1-\alpha u_2,u_1-\alpha u_2)=0$. Since
$$
\mu_1=\min a_+(w,w)
$$
where $\min$ is taken over $w\in H^1_0(D_+)$ satisfying $(w,\Phi_0)_{L^2(D_+)}=0$ and $||w||_{L^2(D_+)}=1$, the minimum is attained on the function $(u_1-\alpha u_2)/||u_1-\alpha u_2||_{L^2(D_+)}$. So we conclude that $u_1-\alpha u_2$ is an eigenfunction corresponding to the eigenvalue $\mu_1=0$, which is impossible since this function is zero on $Y_j$ for $j>2$.

  (b) Let $u\in {\mathcal X}$. Assume, than the nodal line of $u$ does not touch the surface $S_+$. Then one of nodal sets (say $X_1$) contains a neighborhood of $S_+$. The second nodal set (say $X_2$) is separated from $S$. Consider the function $w(X,Y)=u(X,Y)$ for $(X,Y)\in X_2$ and zero otherwise, Then $a(w,w)=0$ but by (\ref{J27c}) this form is positive  definite. Therefore $w=0$. If both end points of $\gamma$ lie outside $S$ then introduce $w_2$ which coincides with $w$ on the nodal set separated from $S$ and vanishes otherwise. Then $a(w_2,w_2)=0$ and hence $w_2=0$ also. This proves that one of end points of the line separated nodal sets must lie on $S$.


(c) Now we are in position to prove that $\dim {\mathcal X}=1$. If $\dim {\mathcal X}>1$ then there is an eigenfunction, say $w_*$ which is zero at the point $z_1=(0,\xi(0))$, which must be one of the end points of the nodal line separating two nodal sets. By b) another end-point $z_2$ of the nodal line lies on $S_+$. Denote by $Y_1$ the nodal set attached to the part of $S$ between $z_1$ and $z_2$.
Let also $Y_2$ be the remaining nodal domain. We can assume that $w_*<0$ in $Y_1$ and $w_*>0$ in $Y_2$.

Let $u_*$ be the function introduced by (\ref{Sept13b}).  Using that both functions $w_*$ and $u_*$ satisfy the problem (\ref{J17ax}) but the first  one satisfies  $\partial_Xw_*=0$ for $X=0$ and the second is subject to $u_*=0$ for $X=0$, we have, by using (\ref{Okt31a}),
\begin{equation}\label{Sept13aa}
\int_0^{\xi(0)}\partial_Xv_*(0,Y)w_*(0,Y)dY=0.
\end{equation}

Consider the function
$$
U=w_*+\beta u_*,\;\;\mbox{where $\beta>0$}.
$$
Since $u_*$ is a positive function inside $D_+$, for small $\beta$ the function $U$ has also two nodal sets $\widetilde{Y}_1$ and $\widetilde{Y}_2$, separated by a nodal line $\widetilde{\gamma}$ with end points $z_1=(0,\eta(0))$ and $\widetilde{z}_2\in S_+$ which is close to $z_2$. We assume that the nodal set $\widetilde{Y}_1$ is attached to the part of $S$ between the points $z_1$ and $\widetilde{z}_2$. Introduce the functions
\begin{eqnarray*}
&&U_1(X,Y)=U(X,Y)\;\;\mbox{if $(X,Y)\in \widetilde{Y}_1$ and zero otherwise},\\
&&U_2(X,Y)=U(X,Y)\;\;\mbox{if $(X,Y)\in \widetilde{Y}_2$ and zero otherwise}.
\end{eqnarray*}
 We choose the constant $\theta$ such that
 \begin{equation}\label{Sept13ba}
 (U_1+\theta U_2,\Phi_0)_{L^2(D)}=0.
 \end{equation}
 It is clear that $\theta\neq 0$ because of
$$
a_+(U_1,\Phi_0)=\mu_0(U_1,\Phi_0)_{L^2(D)}\neq 0.
$$
Here we have used that $\Phi_0>0$ inside $D_+$ by Proposition \ref{PrJ13a}. Furthermore due to (\ref{Sept13aa}) and the fact that $w_*$ is an eigenfunction corresponding to $\mu_1=0$, we have
$$
a_+(U_1+\theta U_2,U_1+\theta U_2)=0.
$$
This together with (\ref{Sept13ba}) implies that $U_1+\theta U_2$ is an eigenfunction corresponding to the eigenvalues $\mu_1=0$ of the spectral problem (\ref{J17ax}). But this contradicts to the smoothness properties of the function $U_1+\theta U_2$. (Compare with the argument used in a) and b)).
\end{proof}

\section{Positivity of $\mu_1(t)$ for small $t$}

We take $R$ sufficiently close to $R_c$, what is equivalent to $F$ is close to $1$. According to Sect. \ref{SJ6a}  the equation ${\mathcal R}(s)=R$ has two roots $s_+<s_c<s_-$. The root $s_-$ corresponds to the Froude number $F_-=F(s_-)>1$ and the root $s_+$ has the Froude number $F_+=F(s_+)<1$. In the latest case the dispersion equation (\ref{Okt6bb}) has a unique solution $\tau=\tau_*$. Here the Froude numbers $F_\pm$ are defined by (\ref{Okt25ba}). Furthermore, there are two uniform stream solutions corresponding to this value of $R$: $(U_-,d_-)$ and $(U_+,d_+)$, where $d_-=d(s_-)$ and $d_+=d(s_+)$.


In order to investigate the sign of $\mu_1(t)$ for small $t$ we proceed as follows. 

So we consider the branch of Stokes waves starting from the uniform stream solution $(U(s_+),d(s_+))$ and having the same Bernoulli constant $R$, by considering the period $\Lambda$ as a parameter and
$$
\Lambda_0:=\Lambda(0)=\frac{2\pi}{\tau_*},
$$
where $\tau_*$ is the root of the equation  (\ref{Okt6bb}).
 According to \cite{KL1} and \cite{KL3} there exists a branch of Stokes waves
\begin{equation}\label{Okt25bba}
(\psi(\tilde{t}),\xi(\tilde{t}),\Lambda(\tilde{t}))\in C^{k,\alpha}_{0, e,\Lambda}({\mathcal D})\times C^{k,\alpha}_{e,\Lambda}({\Bbb R}),\;\;\tilde{t}\in[0,\infty).
\end{equation}
Here $C^{k,\alpha}_{0, e}({\mathcal D})$ is the subspace of even functions vanishing for $Y=0$ in $C^{k,\alpha}({\mathcal D})$ and $C^{k,\alpha}_{0, e,\Lambda}({\mathcal D})$ is a subspace of $C^{k,\alpha}_{0, e}({\mathcal D})$ of functions of period $\Lambda$. Similarly we can define the subspace $C^{k,\alpha}_{e,\Lambda}({\Bbb R})$.

We will consider solutions to (\ref{K2a}) satisfying
\begin{equation}\label{Okt25bb}
|\xi'(X)|\leq M\;\;\mbox{for all $x\in\Bbb R$}.
\end{equation}
 We formulate a consequence of Theorem 1 and 2 from \cite{KLN17}.

\begin{theorem}\label{Tokt26} For every positive $M$ there exists $R_M\in (R_c,R_0)$ (depending also on $\omega$)
such that the following assertions are true.

(I) For every $R\in (R_0,R_M)$ problem (\ref{K2a})
has one and only one non-trivial solitary-wave solution $(\widehat{\Psi},\widehat{\xi})$.

(II) The
following inequalities hold,
$$
d_-<\xi(0;\tilde{t})\leq \widehat{\xi}(0).
$$
Moreover the mapping
$$
[0,\infty)\ni \tilde{t}\to \xi(0;\tilde{t})\in [d_-,\widehat{\xi}(0))
$$
is isomorphism and
$$
\xi(0;\tilde{t})\to \widehat{\xi}(0)\;\;\mbox{and}\;\;\Lambda(t)\to\infty\;\;\mbox{as $\tilde{t}\to\infty$}.
$$

(III) There are no other solutions to (\ref{K2a}) satisfying (\ref{Okt25bb}) exept solutions described by (\ref{Okt25bba}) and above theorem.

\end{theorem}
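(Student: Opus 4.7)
The plan is to derive Theorem~\ref{Tokt26} as a direct consequence of Theorems~1 and~2 of \cite{KLN17}, which together give a complete classification of non-trivial solutions of (\ref{K2a}) whose free surface has uniformly bounded slope and whose Bernoulli constant lies just above $R_c$. The skeleton of the argument has three parts: extract the threshold $R_M$ from the quantitative estimates of \cite{KLN17}, match the resulting periodic solutions with the analytic Stokes branch (\ref{Okt25bba}), and identify the remaining non-periodic solutions with a single solitary wave.

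First I would fix $M>0$ and apply Theorem~1 of \cite{KLN17} to produce $R_M\in(R_c,R_0)$ with the property that every non-trivial solution of (\ref{K2a}) with $R\in(R_c,R_M)$ and $|\xi'|\le M$ is either periodic or decays at infinity to the supercritical uniform stream $(U_-,d_-)$ in the sense of (\ref{Ju10c})--(\ref{Ju10ca}). In the periodic case, I would combine the analyticity of (\ref{Okt25bba}) in the partial hodograph variables with the non-existence of secondary bifurcations for such small $R-R_c$, which is the content of Theorem~2 of \cite{KLN17}, to conclude that every periodic solution is, up to a horizontal translation, of the form $(\psi(\tilde t),\xi(\tilde t),\Lambda(\tilde t))$ for a unique $\tilde t\ge 0$. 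Strict monotonicity of $\tilde t\mapsto \xi(0;\tilde t)$ then follows from a nodal-line analysis of $\partial_{\tilde t}\xi(0;\tilde t)$ combined with the positivity property of $\Psi_X$ for $X>0$ recorded around (\ref{Sept13b}).

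Next, for the solitary-wave alternative I would take any such $(\Psi,\xi)$ and show that it coincides with the $\Lambda\to\infty$ limit of the branch (\ref{Okt25bba}). Compactness is supplied by the uniform $C^{2,\alpha}$ bound from Proposition~3.1 of \cite{KL1}, while the exponential asymptotic decay (\ref{Ju10c})--(\ref{Ju10ca}) prevents loss of mass at spatial infinity when translating. A subsequence of Stokes waves with $\Lambda(\tilde t_n)\to\infty$ therefore converges in $C^{2,\alpha}_{\mathrm{loc}}$ to a solitary wave, and the monotonicity established in the preceding step forces this limit to be unique and to equal the given solitary wave. This yields at once the existence and uniqueness of $(\widehat{\Psi},\widehat{\xi})$ asserted in (I), the limit statement $\xi(0;\tilde t)\to \widehat{\xi}(0)$ as $\tilde t\to\infty$ in (II), and the exhaustiveness claim (III).

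The main obstacle will be this second step: the claim that every admissible solitary wave sits at the end of the Stokes branch, rather than on a disjoint solitary-wave continuum. The hypotheses on $M$ and on the size of $R-R_c$ are essential here, since they are precisely what allows one to invoke the classification of \cite{KLN17} and a separation-in-$C^{2,\alpha}$ argument, grounded in the real analyticity of the branch in the partial hodograph variables, to rule out a second component. Finally, the chain of inequalities $d_-<\xi(0;\tilde t)\le \widehat{\xi}(0)$ in (II) can be read off directly: the lower bound from the strict maximum principle at the crest together with $\xi(X;\tilde t)\to d_-$ as $|X|\to\infty$, and the upper bound from the monotone convergence along the branch as $\tilde t\to\infty$.
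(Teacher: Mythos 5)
The paper offers no proof of Theorem~\ref{Tokt26}: it is stated verbatim as ``a consequence of Theorem~1 and~2 from \cite{KLN17}'' with no derivation supplied. Your proposal rests on exactly the same citation, so the starting point is the same as the paper's; since there is no paper-internal argument to compare against, the only thing one can check is whether your reconstruction is consistent with the way the paper uses the result, and it is. I would only caution that several of your intermediate claims --- that Theorem~2 of \cite{KLN17} is ``non-existence of secondary bifurcations,'' that every admissible solitary wave must appear as the $\Lambda\to\infty$ limit of the branch (\ref{Okt25bba}) rather than on a disjoint solitary continuum, and that the monotonicity of $\tilde t\mapsto\xi(0;\tilde t)$ follows from a nodal-line analysis of $\partial_{\tilde t}\xi$ --- are presented as heuristics and would have to be matched precisely against the actual statements of Theorems~1 and~2 of \cite{KLN17} before they count as a proof; the paper itself does not attempt to fill in those steps, so your sketch is, if anything, more explicit than what the paper provides, but it is also only a sketch.
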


 Denote by $\widetilde{\mathcal A}(\tilde{t})$ the Frechet derivative of the Stokes branch (\ref{Okt25bba}) at the point $\tilde{t}$. As it was shown in Proposition 2.1\cite{Koz1a}, the first eigenvalue is always negative and the second one we denote by $\tilde{\mu}_1(\tilde{t})$. We have $\tilde{\mu}_1(0)=0$. Our first goal is to show that $\tilde{\mu}_1(\tilde{t})>0$ for small positive $\tilde{t}$.

 We will use the partial hodograph transform  for this purpose.

\subsection{Partial hodograph transform}\label{SJ29a}

In what follows we will study  branches of Stokes waves $(\Psi(X,Y;t),\xi(X;t))$ of period $\Lambda(t)$, $t\geq 0$, started from the uniform stream solution at $t=0$. The existence of such branches is established in \cite{CSst} with fixed period but variable $R$ and in \cite{KL1} for variable $\Lambda$ and fixed $R$. In our case of variable $\Lambda$
it is convenient to make the following change of variables
\begin{equation}\label{D11a}
x=\lambda X,\;\;y=Y,\;\;\lambda=\frac{\Lambda_0}{\Lambda(\tilde{t})}
\end{equation}
in order to deal with the problem with a fixed period.
As the result we get
\begin{eqnarray}\label{Okt6aa}
&&\Big(\lambda^2\partial_x^2+\partial_y^2\Big)\psi+\omega(\psi)=0\;\;\mbox{in $D_\eta$},\nonumber\\
&&\frac{1}{2}\Big(\lambda^2\psi_x^2+\psi_y^2\Big)+\eta=R\;\;\mbox{on $B_\eta$},\nonumber\\
&&\psi=1\;\;\mbox{on $B_\eta$},\nonumber\\
&&\psi=0\;\;\mbox{for $y=0$},
\end{eqnarray}
where
$$
\psi(x,y;t)=\Psi(\lambda^{-1}x,y;t)\;\;\mbox{and}\;\;\eta(x;t)=\xi(\lambda^{-1} x;t).
$$
Here all functions have the  same period $\Lambda_0$, $D_\eta$ and $B_\eta$  are the domain and the free surface  after the change of variables (\ref{D11a}).

We assume that
$$
\psi_y>0\;\;\mbox{in $\overline{D_\eta}$}
$$
and use the variables
$$
q=x,\;\;p=\psi.
$$
Then
$$
q_x=1,\;\;q_y=0,\;\;p_x=\psi_x,\;\;p_y=\psi_y,
$$
and
\begin{equation}\label{F28b}
\psi_x=-\frac{h_q}{h_p},\;\;\psi_y=\frac{1}{h_p},\;\;dxdy=h_pdqdp.
\end{equation}

System (\ref{Okt6aa}) in the new variables takes the form
\begin{eqnarray}\label{J4a}
&&\Big(\frac{1+\lambda^2h_q^2}{2h_p^2}+\Omega(p)\Big)_p-\lambda^2\Big(\frac{h_q}{h_p}\Big)_q=0\;\;\mbox{in $Q$},\nonumber\\
&&\frac{1+\lambda^2h_q^2}{2h_p^2}+h=R\;\;\mbox{for $p=1$},\nonumber\\
&&h=0\;\;\mbox{for $p=0$}.
\end{eqnarray}
Here
$$
Q=\{(q,p)\,:\,q\in\Bbb R\,,\;\;p\in (0,1)\}.
$$
The uniform stream solution corresponding to the solution $U$ of (\ref{X1}) is
\begin{equation}\label{M4c}
H(p)=\int_0^p\frac{d\tau}{\sqrt{s^2-2\Omega(\tau)}},\;\;s=U'(0)=H_p^{-1}(0).
\end{equation}
One can check that
\begin{equation}\label{J18aaa}
H_{pp}-H_p^3\omega(p)=0
\end{equation}
or equivalently
\begin{equation}\label{J18aa}
\Big(\frac{1}{2H_p^2}\Big)_p+\omega(p)=0.
\end{equation}
Moreover it satisfies the boundary conditions
\begin{equation}\label{M4ca}
\frac{1}{2H_p^2(1)}+H(1)=R,\;\;H(0)=0.
\end{equation}
The Froude number in new variables can be written as
$$
\frac{1}{F^2}=\int_0^1H_p^3dp.
$$

 Then according to Theorem 2.1, \cite{KL1} there exists a branch of solutions to (\ref{J4a})
\begin{equation}\label{J4ac}
h=h(q,p;t):[0,\infty)\rightarrow C^{2,\gamma}_{pe}(\overline{Q}),\;\;\lambda=\lambda(t):[0,\infty)\rightarrow (0,\infty),
\end{equation}
which  has a real analytic reparametrization locally around each $t\geq 0$.

\subsection{Bifurcation equation}

In order to find bifurcation points and bifuracating solutions we put $h+w$ instead of $h$ in (\ref{J4a}) and introduce the operators
\begin{eqnarray*}
&&{\mathcal F}(w;\tilde{t})=\Big(\frac{1+\lambda^2(h_q+w_q)^2}{2(h_p+w_p)^2}\Big)_p
-\Big(\frac{1+\lambda^2h_q^2}{2h_p^2}\Big)_p\\
&&-\lambda^2\Big(\frac{h_q+w_q}{h_p+w_p}\Big)_q+\lambda^2\Big(\frac{h_q}{h_p}\Big)_q
\end{eqnarray*}
and
$$
{\mathcal G}(w;\tilde{t})=\frac{1+\lambda^2(h_q+w_q)^2}{2(h_p+w_p)^2}-\frac{1+\lambda^2h_q^2}{2h_p^2}+w
$$
acting on $\Lambda_0$-periodic, even functions $w$ defined in $Q$. After some cancelations we get
$$
{\mathcal F}={\mathcal J}_p+{\mathcal I}_q,\;\;{\mathcal G}={\mathcal J}+w,
$$
where
$$
{\mathcal J}=\frac{\lambda^2h_p^2(2h_q+w_q)w_q-(2h_p+w_p)(1+\lambda^2h_q^2)w_p}{2h_p^2(h_p+w_p)^2}
$$
and
$$
{\mathcal I}=-\lambda^2\frac{h_pw_q-h_qw_p}{h_p(h_p+w_p)}.
$$
Both these functions are well defined  for small $w_p$.
Then the problem for finding solutions close to $h$ is the following
\begin{eqnarray}\label{F19a}
&&{\mathcal F}(w;\tilde{t})=0\;\;\mbox{in $Q$}\nonumber\\
&&{\mathcal G}(w;\tilde{t})=0\;\;\mbox{for $p=1$}\nonumber\\
&&w=0\;\;\mbox{for $p=0$}.
\end{eqnarray}

Furthermore, the Frechet derivative (the linear approximation of the functions ${\mathcal F}$ and ${\mathcal G}$) is the following
\begin{equation}\label{J4aa}
Aw=A(\tilde{t})w=\Big(\frac{\lambda^2h_qw_q}{h_p^2}-\frac{(1+\lambda^2h_q^2)w_p}{h_p^3}\Big)_p-\lambda^2\Big(\frac{w_q}{h_p}-\frac{h_qw_p}{h_p^2}\Big)_q
\end{equation}
and
\begin{equation}\label{J4aba}
{\mathcal N}w={\mathcal N}(\tilde{t})w=(N w-w)|_{p=1},
\end{equation}
where
\begin{equation}\label{J4ab}
N w=N(\tilde{t})w=\Big(-\frac{\lambda^2h_qw_q}{h_p^2}+\frac{(1+\lambda^2h_q^2)w_p}{h_p^3}\Big)\Big|_{p=1}.
\end{equation}
The eigenvalue problem for the Frechet derivative, which is important for the analysis of bifurcations of the problem
(\ref{F19a}), is the following
\begin{eqnarray}\label{M1a}
&&A(\tilde{t})w=\mu w\;\;\mbox{in $Q$},\nonumber\\
&&{\mathcal N}(\tilde{t})w=0\;\;\mbox{for $p=1$},\nonumber\\
&&w=0\;\;\mbox{for $p=0$}.
\end{eqnarray}

For $\tilde{t}=0$ and $\mu=0$ this problem becomes
\begin{eqnarray}\label{J15a}
&&A_0w:=-\Big(\frac{w_{p}}{H_p^3}\Big)_p-\Big(\frac{w_{q}}{H_p}\Big)_q=0\;\;\mbox{in $Q$},\nonumber\\
&&B_0w:=-\frac{w_{p}}{H_p^3}+w=0\;\;\mbox{for $p=1$},\nonumber\\
&&w=0\;\;\mbox{for $p=0$}.
\end{eqnarray}
Since the function $H$ depends only on $p$ this problem admits the separation of variables and its solutions are among the functions
\begin{equation}\label{J15aa}
v(q,p)=\alpha(p)\cos (\tau q),\;\;\mbox{where}\;\;\tau=k\tau_*,\;\;k=0,1,\ldots.
\end{equation}
According to \cite{Koz1} the function (\ref{J15aa}) solves (\ref{J15a}) if and only if
$$
\alpha(p)=\gamma(H(p);\tau)H_p,
$$
where the function $\gamma(Y;\tau)$ solves the euation (\ref{Okt6b}) and $\sigma(\tau)=0$. Therefore if $\tau\neq \tau_*$ then the problem (\ref{J15a}) has no non-trivial solutions. If $\tau=\tau_*$ then
the kernel of the above operator is one dimensional in the class of $\Lambda_0$ periodic, even function and it is given by
\begin{equation*}
v=\alpha(p)\cos(\tau_*q),  \alpha(p)=\gamma(H(p);\tau_*)H_p.
\end{equation*}

We will need also the problem
\begin{eqnarray}\label{M6a}
&& -\Big(\frac{u_{p}}{H_p^3}\Big)_p+\frac{\tau^2u}{H_p}=F\;\;\mbox{on (0,d)}\nonumber\\
&&u(0)=0,\;\;-\frac{u_{p}}{H_p^3}+u=c\;\;\mbox{for $p=1$},
\end{eqnarray}
where $F\in C^{0,\alpha}([0,1])$ and $c$ is a constant.
Clearly this problem is
elliptic and uniquely solvable for all $\tau\geq 0$, $\tau\neq\tau_*$, the problem (\ref{M6a}) has a unique solution in  $C^{2,\alpha}([0,1])$. This solution is given by
$$
u(p)=v(H(p))H_p(p),
$$
where $v(Y)$ solves the problem (\ref{J7b}) with $f=F(H(y))$ and $g=c$.

The functions $\tilde\mu_1(\tilde{t})$ and $\lambda(\tilde{t})$ have the representations near $\tilde{t}=0$:
$$
\tilde{\mu}_1(\tilde{t})=\mu_2\tilde{t}^2+O(\tilde{t}^3)
$$
and
\begin{equation}\label{M5a}
\lambda(\tilde{t})=1+\lambda_2\tilde{t}^2+O(\tilde{t}^3).
\end{equation}
It was proved in \cite{Koz23} that
\begin{equation}\label{J3aa}
-4\lambda_2\tau_*^2\int_0^d\gamma(Y;\tau_*)^2dY=\mu_2\int_0^d \gamma(Y;\tau_*)^2\frac{dY}{\Psi_Y},
\end{equation}
This implies that the sign of $\mu_2$ is opposite to the sign of $\lambda_2$. Our main result of this section is the following
\begin{proposition}\label{PrNov15} There exists $R_1>R_c$ such that
$$
\mu_2>0\;\;\mbox{for $R\in (R_c,R_1)$}.
$$
\end{proposition}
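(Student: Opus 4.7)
The plan is to reduce the statement, via the identity (\ref{J3aa}), to showing $\lambda_2<0$ for $R$ in a right-neighbourhood of $R_c$. Both integrals in (\ref{J3aa}) are strictly positive: $\gamma(\cdot;\tau_*)$ is the sign-definite first eigenfunction from Lemma \ref{LJu20a}, and $\Psi_Y>0$ by (\ref{J27ba}). Hence $\mathrm{sign}(\mu_2)=-\mathrm{sign}(\lambda_2)$, so the proposition becomes equivalent to $\lambda_2<0$ near $R_c$.

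To compute $\lambda_2$ I would carry out the power-series expansion of the small-amplitude Stokes branch (\ref{J4ac}) at $\tilde t=0$. Writing
\begin{equation*}
h(q,p;\tilde t)=H(p)+\tilde t\,\phi(q,p)+\tilde t^2 h_2(q,p)+\tilde t^3 h_3(q,p)+O(\tilde t^4),\qquad \lambda(\tilde t)=1+\lambda_2\tilde t^2+O(\tilde t^3),
\end{equation*}
with $\phi(q,p)=\gamma(H(p);\tau_*)H_p(p)\cos(\tau_* q)$ spanning the kernel of (\ref{J15a}), and substituting into (\ref{J4a}): at order $\tilde t^2$ the equation $A_0 h_2=Q(\phi,\phi)$ (with the analogous surface condition) is solvable because $Q(\phi,\phi)$ only excites the harmonics $\cos 0$ and $\cos(2\tau_* q)$, and neither $0$ nor $2\tau_*$ is a root of $\sigma$ by the strict monotonicity of $\sigma$ on $(0,\infty)$. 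The coefficient $\lambda_2$ enters for the first time at order $\tilde t^3$, through $\lambda^2=1+2\lambda_2\tilde t^2+O(\tilde t^4)$ multiplying the first-order terms of $h_q/h_p$ and $h_q^2/h_p^2$; this produces a $\lambda_2$-proportional source with exactly the $\cos(\tau_* q)$ symmetry of $\phi$. Testing the third-order equations against $\phi$ via (\ref{Okt31a}) (transferred to $(q,p)$ variables) yields a solvability relation
\begin{equation*}
\lambda_2\, K(\tau_*)=\mathcal I(\tau_*),
\end{equation*}
where $K(\tau_*)>0$ collects the $\lambda_2$-contribution paired with $\phi$, and $\mathcal I(\tau_*)$ is an explicit, continuous functional of $\phi$ and $h_2$.

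It remains to analyse the sign of $\mathcal I(\tau_*)$ as $R\to R_c^+$, equivalently as $\tau_*\to 0^+$ and $s_+\to s_c^-$. Because $\sigma$ is even, $\sigma'(0)=0$, and $\sigma(0)=3(F_+^2-1)/(2\kappa)\to 0^-$, so the dispersion relation $\sigma(\tau_*)=0$ gives $\tau_*^2\sim -2\sigma(0)/\sigma''(0)\to 0^+$. Expanding $\gamma(Y;\tau_*)=\gamma_0(Y)+\tau_*^2\gamma_1(Y)+\cdots$ from (\ref{Okt6b}) together with the corresponding expansion of $h_2$ obtained at order $\tilde t^2$, one reduces $\mathcal I(\tau_*)$ to a computable combination of moments of $H_p$ and $\omega(p)$ on $(0,1)$. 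I would then verify that the leading coefficient of this expansion is strictly negative, which yields $\lambda_2<0$ for $R\in(R_c,R_1)$ with some $R_1>R_c$ by continuity of $\mathcal I$ and $K$ in $\tau_*$. The main technical obstacle is precisely this asymptotic sign computation: $\tau_*\to 0$ is a degenerate long-wave (KdV) regime in which $\sigma'(0)=0$ causes several contributions to $\mathcal I$ to cancel at leading order, so the expansion must be carried far enough to isolate the effective long-wave coefficient whose sign controls the direction of the small-amplitude bifurcation.
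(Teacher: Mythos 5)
Your plan correctly identifies the structure: (\ref{J3aa}) reduces the claim to $\lambda_2<0$, the branch is expanded in $\tilde t$, the second-order term $v_1$ is uniquely determined because $0$ and $2\tau_*$ are not roots of $\sigma$, and $\lambda_2$ is pinned down by the solvability condition for the third-order term, which must then be analysed as $\tau_*\to 0^+$. Up to that point you are doing exactly what the paper does in Sections \ref{SNov11a}--\ref{SJu5}.

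The gap is that you stop precisely where the proof actually lives. You write ``I would then verify that the leading coefficient of this expansion is strictly negative'' and flag this as the main obstacle, but that verification is the entire content of Proposition \ref{PrNov15}; deferring it leaves the argument without a proof. Moreover, your framing of the obstacle is misleading: you describe a degenerate KdV regime in which ``several contributions to $\mathcal I$ cancel at leading order, so the expansion must be carried far enough.'' The paper's mechanism is the opposite of a delicate cancellation. The zeroth-harmonic part of the second-order correction is $a_1(p)=\int_0^p\bigl(\tfrac{3}{2}\alpha_{0s}^2/H_s + c_1 H_s^3\bigr)\,ds$ with
\begin{equation*}
c_1=(1-F^{-2})^{-1}\int_0^1\frac{3}{2}\frac{\alpha_{0p}^2}{H_s}\,ds,
\end{equation*}
and since this branch starts from the subcritical root $s_+$ with $F=F_+<1$, the factor $(1-F^{-2})^{-1}$ is negative and, as $R\to R_c^+$ (i.e.\ $F_+\to 1^-$), it \emph{diverges} to $-\infty$. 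Plugging $v_1$ into the third-order solvability identity (\ref{Okt4a}) and keeping track of the $c_1$-terms gives (\ref{Nov16a}),
\begin{equation*}
\lambda_2\,\tau_*^2\int_0^1\frac{\alpha_0^2}{H_p}\,dp = c_1\,\frac{9}{8}\int_0^1\Bigl(\frac{\alpha_{0p}^2}{H_p}+O(\tau_*)\Bigr)dp + O(1),
\end{equation*}
where the non-$c_1$ contributions are $O(1)$ while the $c_1$-term blows up to $-\infty$. Thus the sign of $\lambda_2$ is determined by dominance of a single diverging term, not by a careful cancellation of comparable terms, and no high-order KdV expansion is required. Without identifying this mechanism -- in particular the role of $(1-F^{-2})^{-1}$ coming from the mean-flow correction $a_1$ -- the proposal does not close.

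A secondary point: the proposal invokes ``continuity of $\mathcal I$ and $K$ in $\tau_*$'' to pass from the limit $\tau_*\to 0$ to a neighbourhood $(R_c,R_1)$. Since $c_1$ (hence $\mathcal I$) is unbounded as $\tau_*\to 0$, continuity alone is the wrong way to phrase it; what one uses is that the $c_1$-term strictly dominates the remainder for $R$ sufficiently close to $R_c$, which fixes the sign of $\lambda_2$ on an interval $(R_c,R_1)$.
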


Due to (\ref{J3aa}) it is sufficient to prove that $\lambda_2<0$ for $R\in (R_c,R_1)$. We note that both quantities $\lambda_2$ and $\mu_2$ depemds on $\tau_*$, which depends on $R$ and $\omega$. If $R$ is close to $R_c$ then $\tau_*$ is close to zero.

\subsection{Stokes waves for small $\tilde{t}$}\label{SNov11a}

Here we consider asymptotics of solutions of (\ref{J4ac}) for small $t$. For this purpose we take
$$
h=H(p)
$$
and represent the solutions in the form
$$
H(p)+w(q,p,\tilde{t}),\;\;w=\tilde{t}v,
$$
where 
\begin{equation}\label{Okt11b}
v(q,p;\tilde{t})=v_0(q,p)+\tilde{t}v_1(q,p)+\tilde{t}^2v_2(q,p)+\cdots
\end{equation}

In this case
$$
{\mathcal J}=A_1\Big(1+\frac{w_p}{H_p}\Big)^{-2}+A_2\Big(1+\frac{w_p}{H_p}\Big)^{-2},
$$
where
$$
A_1=-\frac{w_p}{H_p^3}
$$
and
$$
A_2=\frac{\lambda^2w_q^2}{2H_p^2}-\frac{w_p^2}{2H_p^4}.
$$
Therefore
$$
{\mathcal J}={\mathcal J}_1+{\mathcal J}_2+{\mathcal J}_3+O(t^4),
$$
where
$$
{\mathcal J}_1=A_1,
$$
$$
{\mathcal J}_2=A_2-2\frac{w_p}{H_p}A_1=\frac{\lambda^2w_q^2}{2H_p^2}+\frac{3}{2}\frac{w_p^2}{H_p^4}
$$
and
$$
{\mathcal J}_3=3\frac{w_p^2}{H_p^2}A_1-2\frac{w_p}{H_p}A_2=-2\frac{w_p^3}{H_p^5}-\frac{w_pw_q^2}{H_p^3}.
$$
Furthermore
$$
{\mathcal I}=-\lambda^2\frac{w_q}{H_p}\Big(1+\frac{w_p}{H_p}\Big)^{-1}
={\mathcal I}_1+{\mathcal I}_2+{\mathcal I}_3+O(t^4).
$$
Here
$$
{\mathcal I}_1=-\lambda^2\frac{w_q}{H_p},\;\;{\mathcal I}_2=\lambda^2\frac{w_qw_p}{H^2_p},\;\;{\mathcal I}_3(w)=-\lambda^2\frac{w_qw_p^2}{H^3_p}.
$$

In what follows we shall use the notation
$$
\tau_*=\frac{2\pi}{\Lambda_*},\;\;\Lambda_*=\Lambda(0).
$$

Inserting (\ref{Okt11b}) and (\ref{M5a}) into (\ref{F19a}) and
equating terms of the same power with respect to $t$, we get
\begin{eqnarray*}
&&A_0v_0:=-\Big(\frac{v_{0p}}{H_p^3}\Big)_p-\Big(\frac{v_{0q}}{H_p}\Big)_q=0\;\;\mbox{in $Q$},\\
&&B_0v_0:=-\frac{v_{0p}}{H_p^3}+v_0=0\;\;\mbox{for $p=1$},\\
&&v_0=0\;\;\mbox{for $p=0$}.
\end{eqnarray*}
As we have shown in previous section the kernel of the above operator is one dimensional  and is generated by the function
\begin{equation}\label{Okt12a}
v_0=\alpha_0(p)\cos(\tau_*q),\;\;\alpha_0=\gamma(H(p);\tau_*)H_p.
\end{equation}

The next term in the asymptotics satisfies the boundary value problem
\begin{eqnarray}\label{Okt13a}
&&A_0v_1+\Big(\frac{v_{0q}^2}{2H_p^2}+\frac{3}{2}\frac{v_{0p}^2}{H_p^4}\Big)_p+\Big(\frac{v_{0q}v_{0p}}{H^2_p}\Big)_q=0\;\;\mbox{in $Q$},\nonumber\\
&&B_0v_{1}+\frac{v_{0q}^2}{2H_p^2}+\frac{3}{2}\frac{v_{0p}^2}{H_p^4}=0 \;\;\mbox{for $p=1$},\nonumber\\
&&v_1=0\;\;\mbox{for $p=0$}.
\end{eqnarray}
The solution of this problem, orthogonal to $v_0$ in $L^2$, is given by
\begin{equation}\label{Okt12aa}
v_1=\alpha_1(p)+\beta_1(p)\cos(2\tau_* q),
\end{equation}
where $\alpha_1$ and $\beta_1$ satisfy the problem (\ref{M6a}) with $\tau=0$ and $\tau=2\tau_*$ respectively with certain right-hand sides.
 Further, the term $v_2$ is fond from the following problem
\begin{eqnarray*}
&&A_0v_2+\Big(\frac{v_{0q}v_{1q}}{H_p^2}+\frac{3v_{0p}v_{1p}}{H_p^4}+{\mathcal J}_3(v_0)\Big)_p
+\Big(\frac{v_{1q}v_{0p}+v_{0q}v_{1p}}{H^2_p}+{\mathcal I}_3(v_0)\Big)_q=2\lambda_2\Big(\frac{v_{0q}}{H_p}\Big)_q\;\;\mbox{on $Q$},\\
&&B_0v_2+\frac{v_{0q}v_{1q}}{H_p^2}+\frac{3v_{0p}v_{1p}}{H_p^4}+{\mathcal J}_3(v_0)=0\;\;\mbox{for $p=1$}\\
&&v_2(q,0)=0.
\end{eqnarray*}

The solvability condition for the last problem has the form
\begin{eqnarray}\label{Okt4a}
&&2\lambda_2\int_\Omega\frac{v_{0q}^2}{H_p}dqdp-\int_\Omega \Big(\Big(\frac{v_{0q}v_{1q}}{H_p^2}+\frac{3v_{0p}v_{1p}}{H_p^4}\Big)v_{0p}+\frac{v_{0q}v_{1p}+v_{1q}v_{0p}}{H^2_p}v_{0q}\Big)dqdp\nonumber\\
&&+\int_\Omega\Big(\Big(\frac{2v_{0p}^3}{H_p^5}+\frac{v_{0p}v_{0q}^2}{H_p^3}\Big)v_{0p}+\frac{v_{0p}^2v_{0q}}{H_p^3}v_{0q}\Big)dqdp=0.
\end{eqnarray}
This relation can be used to find $\lambda_2$ for small $\tau_*$.  The function $v_2$ has the form
\begin{equation}\label{J12a}
v_2=\alpha_2(p)\cos(\tau_* q)+\beta_2(p)\cos(3\tau_* q),
\end{equation}
where $\alpha_2$ and $\beta_2$ satisfy the problem (\ref{M6a}) with $\tau=\tau_*$ and $\tau=3\tau_*$ respectively with certain right-hand sides.

Thus we have shown that $\lambda$ and $v$ have the form (\ref{M5a}) and (\ref{Okt11b}) respectively. More exactly $v_0$ is given by (\ref{Okt12a}), $v_1$ is represented as (\ref{Okt12aa}) and $v_2$ by (\ref{J12a}).

\subsection{Coefficient $\lambda_2$ for small $\tau_*$ and proof of Proposition \ref{PrNov15}}\label{SJu5}

Here we assume that $0<\tilde{t}\ll \tau_*\ll 1$. The relation (\ref{Okt13a}) for finding $v_1$ has the form
\begin{eqnarray}\label{Okt13aa}
&&A_0v_1+\frac{3}{2}\Big(\frac{v_{0p}^2}{H_p^4}\Big)_p=O(\tau_*^2)\;\;\mbox{in $Q$},\nonumber\\
&&B_0v_{1}+\frac{3}{2}\frac{v_{0p}^2}{H_p^4}=O(\tau_*^2)\;\;\mbox{for $p=1$},\nonumber\\
&&v_1=0\;\;\mbox{for $p=0$}.
\end{eqnarray}
We are looking for the solution in the form
$$
v_1(q,p)=a_1(p)\cos^2(\tau_*q).
$$
 Then $a_1$ satisfies the equation
$$
a_{1p}=\frac{3}{2}\frac{\alpha_{0p}^2}{H_p}+H_p^3c_1,
$$
where $c_1$ is a constant. Integrating this relation from $0$ to $p$ we get
$$
a_{1}(p)=\int_0^p\Big(\frac{3}{2}\frac{\alpha_{0p}^2}{H_s}+H_s^3c_1\Big)ds.
$$
From the boundry condition for $p=1$ we get
$$
a_1(1)-c_1=0.
$$
Therefore
$$
c_1\Big(1-\int_0^1H_s^3ds\Big)=\int_0^1\frac{3}{2}\frac{\alpha_{0p}^2}{H_s}ds.
$$
Since $F<1$ and the left hand side is equal to $c_1(1-F^{-2})$ the coefficient $c_1$ is negative and
\begin{equation}\label{Au19c}
c_1=(1-F^{-2})^{-1}\int_0^1\frac{3}{2}\frac{\alpha_{0p}^2}{H_s}ds.
\end{equation}

Integrating relation (\ref{Okt4a}) with respect to $q$ and keeping terms containing $c_1$ we get
\begin{equation}\label{Nov16a}
\lambda_2\tau_*^2\int_0^1\frac{\alpha_{0}^2}{H_p}dp=c_1\frac{9}{8}\Big(\int_0^1\Big(\frac{\alpha_{0p}^2}{H_p}+O(\tau_*)\Big)dp\Big)+O(1).
\end{equation}
Since $F$ is less than $1$ and close to $1$ the number $c_1$ is negative and large in the absolute value when $R_1$ is close to $R_c$. Therefore $\lambda_2<0$, which proves Proposition \ref{PrNov15}.




\section{Proofs of Theorem \ref{T1a} and Proposition \ref{PrNov11}}

\subsection{The main steps of the proof of Theorem \ref{T1a}}.

The assertions (i) and (iii) in Theorem \ref{T1a} follow from Propositions \ref{Ju25a} nd \ref{PF10a} respectively. Therefore it remains to prove the assertion (ii).


The proof of the assertion (ii) consists of three steps.

(i) In the paper \cite{W}, Sect.4, it was proved that the Frechet derivative at the elements of the solitary branch (\ref{J3b}) is invertible for small $t$. This means that $\mu_1(t)\neq 0$ for small $t$.

(ii) Let us take a small $t$ and consider the solitary wave $(\Psi,\xi,R)=(\Psi(t),\xi(t),R(t))$. Then the Bernoulli constant $R$ is close to $R_c$ and we can construct the branch of Stokes waves (\ref{Okt25bba}) starting from the uniform stream solution $(U_+,d_+)$. We prove that the  eigenvalue $\tilde{\mu}_1(\tilde{t})$ of the Frechet derivative is positive for all values of $\tilde{t}$ on this branch of Stokes waves.

(iii) By Theorem \ref{Tokt26} and by (i), (ii) the firs eigenvalue is positive also for the Frechet derivative at the solitary solution $(\Psi,\xi,R)$.

Among these steps only the second step must be proved.

\subsection{Solitary waves: the eigenvalue $\mu_1(t)$ for small $t$, proof of (ii)}

Consider the Frechet derivative on the Stokes branch (\ref{Okt25bba}). As it was proved in \cite{Koz1a} the lowest eigenvalue $\tilde{\mu}_0(\tilde{t})$ of the Frechet derivative is always negative and according to \cite{Koz1a} the eigenvalue $\tilde{\mu}_1(\tilde{t})$ cannot change sign. Indeed if it changes  then we can consider the first bifurcation point and according to \cite{Koz1a} there are infinitely many subharmonic bifurcations near such point, which contradicts to the uniqueness assertion in Theorem \ref{T1a}. Thus $\tilde{\mu}_1(\tilde{t})\geq 0$ for all $\tilde{t}\geq 0$. Assume that there exists a sequence  $\tilde{t}_j\to\infty$ as $j\to\infty$  such that $\tilde{\mu}_1(\tilde{t_j})=0$ for all $j$. This implies that the second eigenvalue $\mu_1(t)$ is zero, but this contradicts to (i). Hence the eigenvalue $\tilde{\mu}_1(\tilde{t})> 0$ for large $t$ and the same is true for the second eigenvalue $\mu_1$. This proves (ii) and consequently Theorem \ref{T1a}.



\subsection{Proof of Proposition  \ref{PrNov11}}

According to the assumption of  Proposition  \ref{PrNov11} there exists a sequence $\{t_j\}_{j01}^\infty$ such that $R(t_j)\to R_*$ and
$$
\xi_j(0)\to R\;\;\mbox{as $j\to\infty$}.
$$
 In this case the limit point $(0,\xi_*(0),R)_*$ is a stagnation point and  the limit configuration $(\Psi_*,\eta_*,F)$ satisfies: there is the angle of $120^\circ$ at the stagnation point.

Reasoning as in Sect. 1.6 \cite{Koz1}, we get
\begin{equation}\label{D21a}
|\nabla\Psi_j(X,Y)|\leq C(R_*,\omega_0),\;\;\mbox{for $(X,Y)\in \overline{{\mathcal D}_{\xi_j}}$},
\end{equation}
where $C$ depends only on $R_*$ and $\omega_0$, $\omega_0=\max_{0\leq p\leq 1}\omega(p)$. One can choose a subsequence of $\{\xi_j\}$ which is convergent in $C^{0,\alpha}_{pe}(\Bbb R)$ for any $\alpha\in (0,1)$, to a function $\xi_*\in C^{0,1}_{pe}(\Bbb R)$. By  (\ref{D21a}) we can assume also that the sequence $\{\widetilde{\Psi}_j\}$, where $\widetilde{\Psi}_j$ is the extension of $\Psi_j$ by $1$ for $Y>\xi(X)$ and by $0$ for $Y<0$,  is also convergent in $L^\infty (Q_{R_*,a})$ to a function $\widetilde{\Psi}_*$, where
$$
Q_{R,a}=\{(X,Y)\,:\,X\in (-a,a),\;Y\in (0,R)\}.
$$
Moreover
$$
\nabla\widetilde{\Psi}_j\; \mbox{weak}^*\;\nabla\widetilde{Psi_*}\;\mbox{in}\; L^\infty (Q_{R,a,b}).
$$
Then the limit functions $(\Psi_*,\xi_*,R_*)$ satisfies (\ref{K2a}) in a weak sense, see \cite{Varv}. One can verify that all conditions of Theorem 5.2, Varvaruca \cite{Varv}, are satisfied and
according to that theorem  there are two options for the limit function $\xi_*$:
\begin{equation}\label{F24a}
\lim_{X\to 0+}\frac{\xi_*(X)}{X}=-\frac{1}{\sqrt{3}}\;\;\mbox{or}\;\;\lim_{X\to 0}\frac{\xi_*(X)}{X}=0.
\end{equation}
 It is assumed in the theorem that  the first option holds, i.e. the limit Stokes wave has an opening angle  $120$ degree at the stagnation points.

 \begin{remark}\label{PrN26} It is proved in \cite{KL5} that the second option in (\ref{F24a}) is imposible. So the first option in (\ref{F24a}) is always satisfied when we approach a stagnation point. Therefore one can remove the text "with angle $120^\circ$ at the crest"
 in Proposition \ref{PrNov11}.
 \end{remark}

In the paper \cite{Koz1} it was shown that the number of negative eigenvalues of the Frechet derivative tends to infinity if we consider the eigenvalue problem in a bounded domain between $Y=\pm a$ with the homogeneous Neuman boundary condition on the new boundary corresponding to the boundary. Since the corresponding eigenfunctions decay outside a small neighborhood of the crest, we can use these function to show that the same is true for the eigenvalue problem in $D(t_j)$, i.e. the number of eigenvalues for the Frechet derivative in  $D(t_j)$ tends to $\infty$ as $j\to\infty$.

Denote by ${\mathcal A}_j$ the Frechet derivative at the point $t_j$. As a result of the above arguments we get

\begin{proposition}\label{PJu30}
 Let $N_j$ be the number of negative eigenvalues of the operator ${\mathcal A}_j$. Then $N_j\to\infty$ as $j\to\infty$.

\end{proposition}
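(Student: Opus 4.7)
The plan is to construct, for each positive integer $N$, a subspace of dimension $N$ in the form domain of ${\mathcal A}_j$ on which the quadratic form $a_j$ (that is, the form (\ref{Ju25a}) evaluated at $(\Psi_j,\xi_j)$) is negative definite, provided $j$ is large enough. Once this is available, Proposition \ref{PrOkt24a} tells us that the spectrum of ${\mathcal A}_j$ below $\nu_0(t_j) > 0$ consists of isolated eigenvalues of finite total multiplicity, so the min--max principle applied on the half-line $(-\infty, 0]$ with this trial space will force $N_j \geq N$. Since $N$ is arbitrary, this gives $N_j \to \infty$.

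First, I would extract from the analysis preceding the proposition the fact that a subsequence of $(\Psi_j,\xi_j)$ converges to a limit configuration $(\Psi_*,\xi_*)$ having a stagnation point at $(0,\xi_*(0))$ and, by the assumption of Proposition~\ref{PrNov11} together with the Varvaruca dichotomy (\ref{F24a}), a genuine $120^\circ$ corner of $\xi_*$ there. It is precisely in this crest layer that the Bernoulli-boundary coefficient $\rho_j$ defined by (\ref{Sept17aa}) blows up as $\Psi_{j,Y}$ degenerates, and this singular behaviour is what produces the accumulating negative modes.

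Second, I would invoke the crest-localisation construction from \cite{KL2} and Theorem~3.1 of \cite{Koz1}. For Stokes waves approaching an extreme configuration those papers produce test functions $\phi_1^{(j,N)},\ldots,\phi_N^{(j,N)}$ that are supported in a bounded $X$-interval $|X|\leq a_N$ around the crest, are mutually $L^2$-orthogonal, and satisfy
$$
a_j(\phi_k^{(j,N)},\phi_k^{(j,N)}) \leq -\kappa_N \|\phi_k^{(j,N)}\|_{L^2(D(t_j))}^2
$$
with constants $a_N$ and $\kappa_N>0$ independent of $j$ (for all sufficiently large $j$). The construction rescales the self-similar solutions of the linearised problem at the $120^\circ$ corner at successive dyadic distances from the stagnation point; each scale contributes one negative mode, and $N$ sufficiently separated scales produce $N$ essentially disjointly supported, hence $L^2$-orthogonal, negative modes. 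Since the Stokes argument only used the geometry near the crest, together with the Neumann truncation, and since these $\phi_k^{(j,N)}$ are compactly supported in the box $\{|X|\leq a_N\}\cap D(t_j)$, they extend by zero to elements of the form domain on the solitary-wave domain $D(t_j)$ without touching the artificial Neumann faces or the asymptotic ends of $D(t_j)$.

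Third, the min--max step: the $N$-dimensional trial space $\mathrm{span}\{\phi_k^{(j,N)}\}$ witnesses $N$ eigenvalues of ${\mathcal A}_j$ lying at most at $-\kappa_N$, hence $N_j \geq N$ for all large $j$. The main obstacle is not the transfer from the truncated box to $D(t_j)$ (which is routine once the supports are compact and separated from the Neumann cuts), but the localisation construction itself in the second step: producing many mutually orthogonal test functions with a uniformly negative Rayleigh quotient in the presence of the blowing-up coefficient $\rho_j$ and the degenerating geometry at the $120^\circ$ corner. This is exactly the content of \cite{KL2} together with Theorem~3.1 of \cite{Koz1}, and once those inputs are granted the conclusion $N_j \to \infty$ follows by the variational argument sketched above.
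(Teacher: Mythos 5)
Your proof is correct and follows essentially the same route as the paper: both arguments transplant the crest‐localized negative modes from \cite{KL2} and Theorem~3.1 of \cite{Koz1} into the unbounded domain $D(t_j)$ and then count them by the min--max principle, using Proposition~\ref{PrOkt24a} to justify that the spectrum below $\nu_0$ is discrete. The only stylistic difference is that you phrase the transfer in terms of compactly supported trial functions extended by zero, whereas the paper invokes the exponential decay of the eigenfunctions of the truncated Neumann problem away from the crest and then cuts them off; the two are interchangeable here since the decay is exactly what lets one localize the supports up to a small error in the Rayleigh quotient.
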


Now using Theorem \ref{T1a} and the above proposition we obtain that the eigenvalue $\mu_1(t)$ is non-negative on $(0,t_*)$ for certain positive $t_*$ and then negative for $t\in (t_*,t_*+\epsilon)$ for a certain positive $\epsilon$. Since $\mu_1(t_*)=0$ is a simple eigenvalue and $\mu_1$ is analytic in a neighborhood of $t_*$ we get that $\mu_1$ is positive for $t<t_*$ and $t$ close to $t_*$. Therefore the crossing number at $t_*$ is $1$. This proves Proposition \ref{PrNov11}.



\section{Acknowledgments}

This material is based upon work supported by the Swedish Research Council under grant no. 2021-06594 while the author was in residence at Institut Mittag-Leffler in Djursholm, Sweden during Oct 08- Oct 29, 2023.

\section{Data availability}

No data was used for the research described in the article.



\section{References}

{

\end{document}